\documentclass[11pt]{article}

\usepackage[latin1]{inputenc}
\usepackage[T1]{fontenc}
\usepackage{amsfonts}
\usepackage{graphicx}
\usepackage{float}
\usepackage[active]{srcltx}
\usepackage{amsmath,amssymb,amsthm}

\newtheorem{fait}{Fact}[section]
\newtheorem{theoreme}[fait]{Theorem}
\newtheorem{lemme}[fait]{Lemma}

\newtheorem{proposition}[fait]{Proposition}

\newcommand{\lima}{\lim_{\alpha\rightarrow+\infty}}
\newcommand{\limab}{\underset{\alpha\rightarrow+\infty}{\lim}}

\newcommand{\ud}{\mathrm{d}}

\newcommand{\Pp}{\mathbb{P}}

\newcommand{\tW}{\widetilde{\mathcal{W}}}

\newcommand{\tB}{\tau_B}
\newcommand{\tM}{\tau_M}
\newcommand{\ta}{\tau_{X_\alpha}}
\newcommand{\tla}{\sigma_{X_\alpha}}
\newcommand{\tlam}{\tla(e^{\alpha h(\alpha)},m)}
\newcommand{\sam}{\sigma_{X_\alpha^m}(e^{\alpha h(\alpha)},m)} 
\newcommand{\sa}{\sigma_{X_\alpha}(e^{\alpha h(\alpha)})}
\newcommand{\sB}{\sigma_B}
\newcommand{\mx}{x_\alpha}
\newcommand{\ax}{\alpha^{-2}x}
\newcommand{\aK}{\alpha^{-2}K}
\newcommand{\R}{\mathbf{R}}
\newcommand{\Z}{\mathbf{Z}}
\newcommand{\N}{\mathbf{N}}
\newcommand{\egloi}{\stackrel{\mathcal{L}}{=}}
\newcommand{\eglw}{\stackrel{\mathcal{L}_W}{=}}

\newcommand{\Wb}{\overline{W}}

\newcommand{\gW}{g(\alpha)}
\newcommand{\cvgloi}{\stackrel{\mathcal{L}}{\longrightarrow}}
\newcommand{\Wu}{\underline{W}}

\newcommand{\un}{1\!\!1}


\title{Limit law of the local time for Brox's diffusion}
 \author{Pierre Andreoletti $^*$, Roland Diel 
\footnote{Laboratoire MAPMO - C.N.R.S. UMR 6628 - F\'ed\'eration Denis-Poisson, Universit\'e d'Orl\'eans, 
(Orl\'eans France). \newline \vspace{0.1cm}  $\quad$  MSC 2000 60J25; 60J55. \newline \vspace{0.5cm} \textit{Key words :  Diffusion process in Brownian potential, Local time} }}

\begin{document}
\maketitle
\begin{abstract}
 We consider Brox's model: a one-dimensional diffusion in a Brownian potential $W$. 
We show that the normalized local time process $(L(t,m_{\log t}+x)/t,\ x \in\R)$, where $m_{\log t}$ is the bottom of the deepest valley reached by the process before time $t$, behaves asymptotically like a process which only depends on $W$. As a consequence, we get the weak convergence of the local time to a functional of two independent three-dimensional Bessel processes and thus the limit law of the supremum of the normalized local time.  These results are discussed and compared to the discrete time and space case which same questions have been solved recently by N. Gantert, Y. Peres and Z. Shi \cite{GanPerShi}.   
\end{abstract}

\section{Introduction}
\subsection{The model}
Let $\left(W(x),\ x\in\R\right)$ be a c\`adl\`ag and locally bounded real valued process such that $W(0)=0$. A \emph{diffusion process in the environment $W$} is a process $\left(X(t),\ t \in \R^+\right)$ which conditional generator given $W$ is
\begin{displaymath}
	\frac{1}{2}e^{W(x)}\frac{d}{dx}\left( e^{-W(x)}\frac{d}{dx} \right).
\end{displaymath}
Notice that for almost surely differentiable $W$, $(X(t),\ t \in \R^+)$ is the solution of the following stochastic differential equation 
\begin{displaymath}
\left\{ \begin{array}{ll}
\ud X(t)=\ud \beta(t)-\frac{1}{2}W'(X(t))\ud t,\\
X(0)=0.
\end{array} \right.
\end{displaymath}
where $\beta$ is a standard one-dimensional Brownian motion independent of $W$. Of course when $W$ is not differentiable, the previous equation has no rigorous sense.
 
This process, introduced by S. Schumacher  \cite{Schumacher} and T. Brox \cite{Brox}, is usually studied with $W$ a L\'evy process. In fact only a few papers deal with the discontinuous case, see for example P. Carmona \cite{Carmona} or A. Singh \cite{Singh}, and most of the results concern continuous $W$, \textit{i.e.} $(W(x):=B_x-\kappa x/2, x \in \R)$, with $\kappa \in \R^+$ and $B$ a two sided Brownian motion independent of $\beta$.
The case $\kappa>0$ is first studied by K. Kawazu and H. Tanaka \cite{KawTan}, then by H. Tanaka \cite{Tanaka} and Y. Hu, Z. Shi and M. Yor \cite{HuShYo} and more recently by M. Taleb \cite{Taleb0} and A. Devulder \cite{Devulder1,Devulder}. The universal characteristic of this case is the transience, however a wide range of limit behaviors appears depending on the value of $\kappa$, see \cite{HuShYo}.

In this paper we choose $\kappa=0$, $X$ is then recurrent and  \cite{Brox} shows that it is sub-diffusive with asymptotic behavior in $(\log t)^2$. Moreover $X$ has the property, for a given instant $t$, to be localized in the neighborhood of a random point $m_{\log t}$ depending only on $t$ and $W$. The limit law of $m_{\log t}/(\log t)^2$ and therefore of $X_t/(\log t)^2$ has been given independently by H. Kesten \cite{Kesten2} and A. O. Golosov \cite{Golosov1}.
In fact, the aim of H. Kesten and A. O. Golosov was to determine the limit law of the discrete time and space analogous of Brox's model introduced by F. Solomon \cite{Solomon} and then studied by Ya. G. Sinai \cite{Sinai}. This random walk in random environment, usually called Sinai's walk, $(S_n,n \in \N)$ has actually the same limit distribution as Brox's one.

Turning back to Brox's diffusion, notice that H. Tanaka \cite{Tanaka3,Tanaka} obtains a deeper localization and later Y. Hu and Z. Shi \cite{HuShi2} get the almost sure rates of convergence. It appears that these rates of convergence are exactly the same as the ones of Sinai's walk. The question of an invariance principle, that could exist between these two processes rises and remains open (see Z. Shi \cite{Shi1} for a  survey).

This work is devoted to the limit distribution of the local time of $X$. Indeed to the diffusion corresponds a local time process $\left(L_X(t,x),\ t\geq0,x\in\R\right)$ defined by the occupation time formula: $L_X$ is the unique $\Pp$-a.s. jointly continuous process such that for any bounded Borel function $f$ and for any $t\geq0$,
\begin{displaymath}
	\int_0^tf(X_s)\ud s=\int_\R f(x)L_X(t,x)\ud x.
\end{displaymath}
The first results on the behavior of $L_X$ can be found in \cite{HuShi1} and \cite{Shi}. In particular, in \cite{HuShi1} they obtain for any $x \in \R$ fixed, 
\begin{align} 
\frac{\log(L_X(t,x))}{\log t} \cvgloi \min(U,\hat{U}), \ t \rightarrow + \infty
\end{align}
where $U$ and $\hat{U}$ are independent variables uniformly distributed in $(0,1)$ and $\cvgloi$ is the convergence in law. Notice that in the same paper Y. Hu and Z. Shi also prove that Sinai's walk has the same behavior: if we denote by  $L_S(n,x):=\sum_{i=1}^n \un_{S_i=x}$ the local time of $S$ in $x\in \Z$ at time $n$ then $$\frac{\log(L_S(n,x))}{\log n} \cvgloi \min(U,\hat{U}),  \ n \rightarrow + \infty.$$  For previous works on the local time for Sinai's diffusion we refer to the book of P. R\'ev\`esz \cite{Revesz}. 

In this article we show that the normalized local time process $(L(t,x+m_{\log t})/t,\ x \in \R)$ behaves asymptotically as a process which only depends on the environment $W$. We also make explicit the limit law of this process when $t$ goes to infinity,  it involves some 3-dimensional Bessel processes.

Define the supremum of the local time process of $X$: 
\begin{displaymath}
\forall t\geq0,\ L_X^*(t):=\sup_{x\in\R}L_X(t,x).  
\end{displaymath}
As a consequence of our results we show that $L_X^*(t)/t$ converges weakly and determine its limit law. We also find interesting to compare the discrete Theorems of \cite{GanPerShi} with ours, pointing out analogies and differences.

\subsection{Preliminary definitions and results}
First let us describe the probability space where $X$ is defined. It is composed of two Wiener spaces, one for the environment and the other one for the diffusion itself. Let $\mathcal{W}$ be the space of continuous functions $W:\R\rightarrow\R$ satisfying $W(0)=0$ and $\mathcal{A}$ the $\sigma$-field generated by the topology of uniform convergence on compact sets on $\mathcal{W}$. We equip $\left(\mathcal{W},\mathcal{A}\right)$ with Wiener measure $\mathcal{P}$ i.e the coordinate process 
is a "two-sided" Brownian motion. We call \emph{environment} an element of $\mathcal{W}$. We then define the set $\Omega:=C([0;+\infty),\R)$, the $\sigma$-field $\mathcal{F}$ on $\Omega$ generated by the topology of uniform convergence on compact sets and the probability measure $P$ such that the coordinate process on $\Omega$ is a standard Brownian motion.

We denote by $\Pp$ the probability product $\mathcal{P}\otimes P$ on $\mathcal{W}\times\Omega$. We indifferently denote by $W$ and call environment an undetermined element of $\mathcal{W}$ and the first coordinate process on $\mathcal{W}\times\Omega$ (i.e a "two-sided" Brownian motion under $\Pp$ or $P$) similarly $B$ is indifferently an element of $\Omega$ and the second coordinate process on $\mathcal{W}\times\Omega$. Finally $\eglw$ means "equality in law" under $P$, that is under a fixed environment $W$, and $\egloi$ (respectively $\cvgloi$) for an equality (resp. a convergence) in law under $\Pp$. We can now state our first result:

\begin{theoreme}\label{thlimsup}$ $ We have
\begin{align*}
	\frac{L^*_X(t)}{t} \cvgloi \frac{1 }{\int_{-\infty}^{\infty}e^{-R(y)}\ud y} 
\end{align*}
where for any $x\in\R$, $R(x):=R_1(x)\textbf{\emph{1}}_{\left\{x\geq0\right\}}+R_2(-x)\textbf{\emph{1}}_{\left\{x<0\right\}}$, $R_1$ and $R_2$ are two independent 3-dimensional Bessel processes starting at $0$. 
\end{theoreme}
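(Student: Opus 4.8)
The plan is to deduce Theorem~\ref{thlimsup} from the convergence of the whole normalized local time profile announced just before its statement. Concretely, I would first establish (or invoke, if proved earlier in the paper) that the process $(L_X(t,m_{\log t}+x)/t,\ x\in\R)$ converges in law, jointly with the renormalization, to $(e^{-R(x)}/\int_\R e^{-R(y)}\ud y,\ x\in\R)$, where $R$ is the two-sided Bessel process described in the statement. Given this, the natural candidate for the limit of $L^*_X(t)/t=\sup_{x\in\R}L_X(t,m_{\log t}+x)/t$ is $\sup_{x\in\R} e^{-R(x)}/\int_\R e^{-R(y)}\ud y$. Since $R(0)=0$ and $R$ is nonnegative, this supremum equals $1/\int_{-\infty}^{\infty}e^{-R(y)}\ud y$, which is exactly the right-hand side. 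So the real work is justifying that the $\sup$ functional passes to the limit.

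The key steps, in order, would be: (i) reduce $L^*_X(t)$ to the supremum over a window around $m_{\log t}$, i.e.\ show $\sup_{|x|>A_t}L_X(t,m_{\log t}+x)/t\to 0$ in probability for a suitable slowly growing $A_t$ (the localization results of Brox, Tanaka, and Hu--Shi, together with the valley structure of $W$, control the time spent by $X$ far from $m_{\log t}$); (ii) on the window $[-A_t,A_t]$, use the already-established process-level convergence — continuity of $(L_X(t,m_{\log t}+x)/t)_x$ in $x$ is given by the occupation-time definition, and the limiting process $x\mapsto e^{-R(x)}$ is a.s.\ continuous, so $\sup_{|x|\le A_t}$ converges to $\sup_{x\in\R}e^{-R(x)}/\int_\R e^{-R(y)}\ud y$; (iii) evaluate the limiting supremum: because $R_1,R_2$ are Bessel$(3)$ started at $0$ hence strictly positive on $(0,\infty)$ and $R(0)=0$, the map $x\mapsto e^{-R(x)}$ attains its maximum, value $1$, only at $x=0$, so $\sup_x e^{-R(x)}=1$ and the ratio is $1/\int_{-\infty}^{\infty}e^{-R(y)}\ud y$; (iv) check the normalizing integral is a.s.\ finite and positive so the expression is well defined — finiteness follows since $R_i(y)\sim \sqrt{y}$ grows and more precisely $e^{-R_i(y)}$ is integrable at $+\infty$ by the law of the iterated logarithm lower bound for Bessel processes, and positivity is immediate.

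I would carry this out by writing $L^*_X(t)/t = \max\bigl(\sup_{|x|\le A_t},\ \sup_{|x|>A_t}\bigr)$, bounding the tail term via a quantitative version of the localization (the deepest-valley estimates that presumably appear earlier, controlling $L_X(t,\cdot)$ outside a neighborhood of $m_{\log t}$ by the exit-time / exponential-functional-of-$W$ bounds), and then applying the continuous mapping theorem to the restriction of the local-time process to $[-A_t,A_t]$, which is legitimate once the tail is negligible and $A_t\to\infty$. A mild technical point is that $A_t$ grows with $t$ while the process convergence is stated on fixed compacts; this is handled by a standard diagonal/$\varepsilon$-$\delta$ argument, choosing first a large fixed $A$ so that $\sup_{|x|>A}e^{-R(x)}$ is small with high probability, then using convergence on $[-A,A]$.

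The main obstacle I expect is step~(i): showing that the normalized local time is asymptotically negligible outside a window around $m_{\log t}$, uniformly enough to control the supremum and not merely the value at a fixed point. This requires more than the in-law localization of $X(t)$ itself — one needs that the \emph{occupation measure} up to time $t$ puts vanishing mass far from $m_{\log t}$, which in turn rests on the fine description of the potential valleys (the depth and the "$h(\alpha)$-extrema" structure encoded by the macros $\tlam$, $h(\alpha)$, etc.\ in the preamble) and on hitting-time estimates for $X$ expressed through exponential functionals of $W$. Once that tail control is in hand, the passage to the limit and the identification of the limit as $1/\int_{-\infty}^{\infty}e^{-R(y)}\ud y$ are comparatively routine.
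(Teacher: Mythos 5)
Your overall route is the paper's: deduce Theorem \ref{thlimsup} from the process-level convergence of Theorem \ref{cvloi}, then observe that since $R\geq0$ and $R(0)=0$ the limiting profile $e^{-R(x)}/\int_{\R}e^{-R}$ has supremum $1/\int_{-\infty}^{\infty}e^{-R(y)}\ud y$, attained at $x=0$. The gap is your step (i), which you yourself flag as ``the main obstacle'' and never actually carry out: the reduction of $\sup_{x\in\R}$ to a window around $m_{\log t}$. Worse, the sufficient condition you propose for it --- that the occupation measure up to time $t$ puts vanishing mass far from $m_{\log t}$ --- is not sufficient. A bound on $\int_{|x|>A_t}L_X(t,m_{\log t}+x)\ud x$ of order $o(t)$ is perfectly compatible with a tall narrow spike of local time at the bottom of a steep secondary well, so it does not control $\sup_{|x|>A_t}L_X(t,m_{\log t}+x)/t$. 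The phenomenon is real, not hypothetical: the result of Z. Shi quoted in the introduction, $\limsup_t L_X^*(t)/(t\log\log\log t)\geq \mathrm{const}$ $\Pp$-a.s., shows the local time can greatly exceed $t$ at exceptional points/times, so any argument for the tail of the supremum must say something about \emph{where the supremum sits}, not merely how much time is spent far away. Deriving such a uniform statement from valley/hitting-time estimates is genuinely nontrivial work that your sketch defers rather than does.

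The paper does not do that work either; it short-circuits it by invoking an external result of D. Cheliotis \cite{Cheliotis}: the (first) favorite point $m^*(e^\alpha)$ satisfies $m^*(e^\alpha)-m_\alpha\to0$ in $\Pp$-probability. Hence, with probability tending to one, the global supremum $L_X^*(e^\alpha)$ equals $\sup_{x\in[-K,K]}L_X(e^\alpha,m_\alpha+x)$ for any fixed $K>0$, and Theorem \ref{thlimsup} follows at once from Theorem \ref{cvloi} together with the evaluation of the supremum of the limit profile that you also perform; no growing window $A_t$, diagonal argument, or occupation-measure tail estimate is needed. To repair your proposal, either quote this favorite-point localization or supply an actual uniform bound on the local time outside a fixed compact neighborhood of $m_{\log t}$; as written, step (i) is a missing ingredient rather than a routine technicality.
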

Notice that $\Pp$-a.s., $\int_{-\infty}^{\infty}e^{-R(y)}\ud y< + \infty$. Moreover thanks to Le Gall's Ray-Knight theorem (Proposition 1.1 in \cite{LeGall}), the limit law can be expressed in a simpler way:
\begin{align}
\int_{-\infty}^{\infty}e^{-R(y)}\ud y\egloi 4\tau(1)+4\tilde{\tau}(1)\label{Bes2}
\end{align}
where $\tau(1)$ is the hitting time of 1 by a squared Bessel process of dimension 2 starting from 0 and $\tilde{\tau}(1)$ is an i.i.d copy of $\tau(1)$.  
Now we present the equivalent of this theorem for Sinai's walk of \cite{GanPerShi},
\begin{align}
	\frac{L^*_S(n)}{n} \cvgloi \sup_{x \in \Z} \pi(x) \label{GPS}
\end{align}
where 
\begin{align}
	\pi(x)= \frac{\exp(-Z_{x}) +\exp(-Z_{x-1}) }{2\sum_{y \in \Z}\exp(-Z_{y})},\ x \in \Z, \label{invmes}
\end{align} 
and $Z$ is a sum of i.i.d random variables (with mean zero, strictly positive variance and bounded) null at zero and conditioned to stay positive (see Theorem 1.1 and Section 4 of \cite{GanPerShi} for the exact definition).

The analogy between the local time for $X$ and the local time for $S$ takes place in the fact that both $R$ and $S$ can be obtained from classical diffusion conditioned to stay positive: $R_1$ and $R_2$ are Brownian motions conditioned to stay positive (see \cite{Williams}) and $Z$ is a simple symmetric random walk conditioned to stay positive (see \cite{Bertoin} and  \cite{Golosov}). Note that A. O. Golosov also proved in \cite{Golosov} that $\sum_{y \in \Z}\exp(-Z_{y})<+ \infty$. 
However $Z$ and $R$ have not the same nature, one is discrete the other one continuous, notice also that the increments of $Z$ are bounded (see hypothesis 1.2 in \cite{GanPerShi}), and it is not the case for $R$.

Theorem \ref{thlimsup} is an easy consequence of  an interesting intermediate result (Theorem \ref{cvloi} below). Before introducing that result we need some extra definitions on the environment, these basic notions have been introduced by \cite{Brox} (see also \cite{NevPit}). 
Let $h>0$,\label{defvalley} we say that $W\in\mathcal{W}$ admits a $h$-\emph{minimum} at $x_0$ if there exists $\xi$ and $\zeta$ such that $\xi<x_0<\zeta$ and 
\begin{itemize}
\item $W(\xi)\geq W(x_0)+h$,
\item $W(\zeta)\geq W(x_0)+h$.
\item for any $x\in[\xi,\zeta]$, $W(x)\geq W(x_0)$,
\end{itemize}
Similarly we say that $W$ admits a $h$-\emph{maximum} at $x_0$ if $-W$ admits a $h$-minimum at $x_0$. We denote by $M_h(W)$ the set of $h$-\emph{extrema} of $W$. It is easy to establish that $\mathcal{P}$-a.s. $M_h$ has no accumulation point and that the points of $h$-maximum and of $h$-minimum alternate. Hence there exists exactly one triple $\Delta_h=(p_h,m_h,q_h)$ of successive elements in $M_h$ such that 
\begin{itemize}
 \item $m_h$ and $0$ lay in $[p_h,q_h]$,
\item $p_h$ and $q_h$ are $h$-maxima,
\item $m_h$ is a $h$-minimum.
\end{itemize}
We call this triple the standard $h$-valley  \label{stdval} of $W$ (see Figure \ref{fig1}). 
\begin{figure}[ht]
\begin{center}
\input{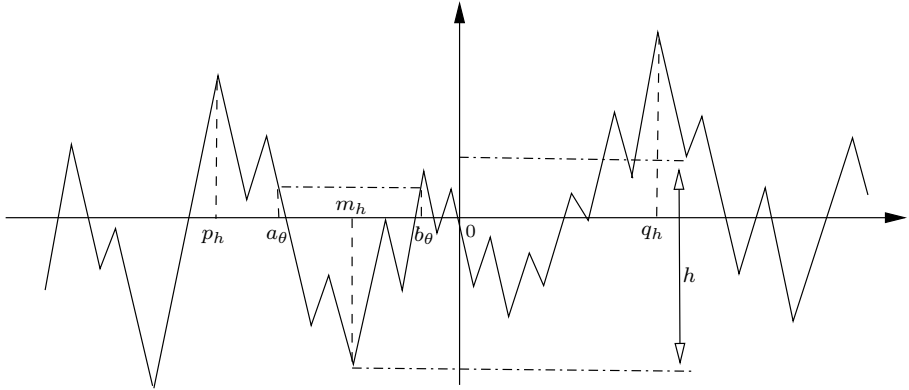} 
\caption{Example of a standard valley} \label{fig1}
\end{center}
\end{figure}

\noindent We can now state our second result:
\begin{theoreme}\label{cvloi}The process $\left(L_X(t,m_{\log t}+x)/t,\ x\in\R\right)$ converges weakly uniformly on compact set to a process $\left(\mathfrak{R}(x),\ x\in\R\right):=e^{-R(x)}/\int_{-\infty}^{\infty}e^{-R}$
where $R$ is the same as in Theorem \ref{thlimsup}.
\end{theoreme}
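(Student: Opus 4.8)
\emph{Proof sketch.} The plan is to combine the classical representation of Brox's diffusion as a time-changed Brownian motion with the convergence, as its depth grows, of the deepest valley of $W$ recentred at its bottom. Fix an environment $W$, let $A(x)=\int_0^x e^{W(u)}\,\ud u$ be the associated scale function, and let $B$ be a Brownian motion independent of $W$, with jointly continuous local time $(L_B(s,a))_{s\geq0,\,a\in\R}$. Setting $T(s)=\int_\R L_B(s,A(x))e^{-W(x)}\,\ud x$ and $\sigma=T^{-1}$, one has $X_t=A^{-1}(B_{\sigma(t)})$, and the occupation time formula gives $L_X(t,x)=c\,e^{-W(x)}L_B(\sigma(t),A(x))$ for a positive constant $c$, whence
\begin{equation*}
\frac{L_X(t,m_{\log t}+x)}{t}=\frac{e^{-W(m_{\log t}+x)}\,L_B(\sigma(t),A(m_{\log t}+x))}{\int_\R e^{-W(y)}\,L_B(\sigma(t),A(y))\,\ud y}.
\end{equation*}
With $\Pp$-probability tending to $1$, $m_{\log t}$ coincides, for $t$ large, with the bottom $m_h$ of the standard $h$-valley $(p_h,m_h,q_h)$ for a suitable $h=h(t)\sim\log t$; we write this valley as $[p,q]$.

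\emph{Step 1: reduction to a functional of $W$.} Brox's localisation yields that, with probability tending to $1$, the diffusion reaches $m_h$ before time $t$ and never leaves $[p,q]$; quantitatively this rests on the exit time of a depth-$h$ valley being of order $e^h$ up to a factor that is non-degenerate in the limit, which one reads off from hitting-time estimates for the additive functional $T$. In particular $B_{\sigma(t)}$ stays in $A([p,q])$, so the denominator integral is effectively over $[p,q]$. Next one shows that over the bulk of the valley, namely on $m_h+[-A,A]$ with $A$ large and fixed --- the region carrying almost all the mass of $e^{-W(y)}\,\ud y$ --- the Brownian local time is essentially flat: $L_B(\sigma(t),A(y))=L_B(\sigma(t),A(m_h))\,(1+o_\Pp(1))$ uniformly in $y$. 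Indeed, by time $\sigma(t)$ the motion $B$ has made many excursions around $A(m_h)$, and by the Ray--Knight description of Brownian local time the spatial local-time field there is a squared-Bessel-type diffusion whose relative oscillation over $A(m_h+[-A,A])$ is $O_\Pp\bigl(\sqrt{(A(m_h+A)-A(m_h-A))/L_B(\sigma(t),A(m_h))}\bigr)$, and this tends to $0$ because $L_B(\sigma(t),A(m_h))$ is of order $t\,e^{W(m_h)}$ whereas the interval length is of order $e^{W(m_h)}$. Feeding this into numerator and denominator, together with a uniform (in $t$) bound on the denominator contribution of $\{|y-m_h|>A\}$ coming from the confinement and the decay of $e^{-W(y)}$, one obtains, uniformly for $x$ in a fixed compact,
\begin{equation*}
\frac{L_X(t,m_h+x)}{t}=\frac{e^{-(W(m_h+x)-W(m_h))}}{\int_{p}^{q}e^{-(W(y)-W(m_h))}\,\ud y}\,\bigl(1+o_\Pp(1)\bigr).
\end{equation*}

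\emph{Step 2: convergence of the recentred environment.} It remains to identify the limit in law of the right-hand side, a functional of $W$ alone. For every $K$ one must show
\begin{equation*}
\bigl(W(m_h+x)-W(m_h)\bigr)_{-K\leq x\leq K}\ \cvgloi\ \bigl(R(x)\bigr)_{-K\leq x\leq K},\qquad h\to\infty,
\end{equation*}
together with $\int_{p}^{q}e^{-(W(y)-W(m_h))}\,\ud y\cvgloi\int_{-\infty}^{\infty}e^{-R(y)}\,\ud y$. The first convergence rests on the path decomposition of Brownian motion at its minimum (Williams' time-reversal theorem): seen from $m_h$, the two sides of $W$ are, conditionally on the valley geometry, Brownian motions conditioned to stay positive, run until they reach the levels $W(p)-W(m_h)$ and $W(q)-W(m_h)$; both levels are at least $h$, hence tend to $+\infty$, and the two sides decouple in the limit, which produces precisely two independent $\mathrm{BES}(3)$ processes, that is $R$. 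The convergence of the normalising integrals then follows, once one controls the contribution of $[p,q]\setminus[-A,A]$ uniformly in $h$ for $A$ large, using the transience of $\mathrm{BES}(3)$ and the a.s.\ finiteness of $\int_{-\infty}^{\infty}e^{-R}$ recalled after Theorem \ref{thlimsup}. Combined with Step~1 via a Slutsky-type argument --- conditioning on a set of ``good'' environments of probability close to $1$ --- this gives convergence of the finite-dimensional distributions of $L_X(t,m_{\log t}+\cdot)/t$ to those of $\mathfrak R$.

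\emph{Step 3 and the main obstacle.} Tightness of $(L_X(t,m_{\log t}+\cdot)/t)_t$ in $C([-K,K])$ follows from a modulus-of-continuity estimate, a consequence of the H\"older continuity of Brownian local time in the space variable and the local Lipschitz behaviour of $A$ on the valley (both already available from Step~1); this upgrades the finite-dimensional convergence to weak convergence uniform on compact sets, which is the assertion of Theorem \ref{cvloi}. The genuinely delicate point is Step~1: one must make the flatness of $L_B(\sigma(t),\cdot)$ and the associated tail estimates quantitative with enough uniformity, while $\sigma(t)$, $h$, $m_h$ and the valley profile are all random and coupled. The natural remedy is to work first under a fixed ``good'' environment --- in a set of $\Pp$-probability tending to $1$ on which the standard $h$-valley has quantitatively controlled depth, width and slopes, a set whose existence is guaranteed by the environment convergence of Step~2 --- prove all the local-time estimates $P$-almost surely on that set, and only afterwards integrate over $W$.
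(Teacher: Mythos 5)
Your two-stage architecture --- first replace $L_X(t,m_{\log t}+x)/t$ by the environment functional $e^{-W_{m}(x)}/\int e^{-W_{m}}$ over the valley, then prove that this functional converges weakly to $e^{-R}/\int e^{-R}$ --- is exactly the paper's (Theorem \ref{limL} on one side, Lemmata \ref{eqint} and \ref{cvloiw} on the other), and your Step 2 is in substance the paper's Lemma \ref{cvloiw}, there deferred to Tanaka's Lemma 3.2. The gap is in Step 1, which carries the real content. You justify the flatness of $y\mapsto L_B(\sigma(t),A(y))$ near $A(m_h)$ by invoking a Ray--Knight description of the local-time field \emph{at the time} $\sigma(t)=T^{-1}(t)$. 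The Ray--Knight theorems describe $a\mapsto L_B(\cdot,a)$ only at first hitting times $\tau_B(x)$ or at inverse local times $\sigma_B(r,x)$; $T^{-1}(t)$ is a stopping time, but not of either special form, so no squared-Bessel description of the field at that time is available. Worse, your quantitative input ``$L_B(\sigma(t),A(m_h))$ is of order $t\,e^{W(m_h)}$'' is, through the identity $L_X(t,m_h)=e^{-W(m_h)}L_B(T^{-1}(t),S(m_h))$ (the paper's (\ref{tsc})), precisely the $x=0$ case of the statement being proved, so as written the argument is circular; and your closing remedy (restricting to good environments) cannot repair this, because the difficulty lies in the law of $B$'s local-time field at $T^{-1}(t)$, not in the environment.

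The paper breaks this circle with a device absent from your sketch: it stops the rescaled diffusion at the \emph{inverse local time} at the bottom, $\sigma_{X_\alpha}(e^{\alpha h(\alpha)},m)$, where the local time at the centre equals $e^{\alpha h(\alpha)}$ by construction and where hitting-time decompositions, the scalings (\ref{eqsca1})--(\ref{eqsca2}) and the first Ray--Knight theorem do apply (Propositions \ref{limLT} and \ref{limT}); it then shows that this random time is equivalent to $g(\alpha)e^{\alpha h(\alpha)}$ and transfers the conclusion to the deterministic time $e^{\alpha h(\alpha)}$ by monotonicity of $t\mapsto L_{X_\alpha}(t,\cdot)$, sandwiching with the two auxiliary functions $h^{\pm}$ of Proposition \ref{cle}. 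Some such mechanism (or an independent a priori lower bound on $L_B(\sigma(t),A(m_h))$ combined with a Barlow--Yor-type modulus of continuity, plus an upper bound on the denominator's contribution outside the bulk, which in your formulation again threatens to require Theorem \ref{thlimsup} itself) is needed before your Step 1 becomes a proof. A more minor point: the paper runs its Laplace-method estimates for a \emph{fixed} environment after the scaling $W\mapsto\alpha W^{\alpha}$ and only then uses the invariance of $\mathcal{P}$ under this map; your unscaled formulation would need uniform, probabilistic versions of those estimates.
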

This result is the analogue of Theorem 1.2 of \cite{GanPerShi}: 
\begin{align}
	\left(\frac{L_S(n,b_n+x)}{n},x \in \Z\right) \cvgloi \left(\pi(x),\ x \in \Z\right) \label{GPS2}
\end{align}
where $\pi(x)$ is given by (\ref{invmes}) and $b_n$ plays the same role for $S$ as $m_{\log t}$ plays for $X$. 

There is a second remark we can make here: H. Tanaka \cite{Tanaka2} shows that the limit  when $t$ goes to infinity of the distribution of the process $(X(t)-m_{\log t})$ converges  to a distribution with a density (with respect to Lebesgue measure) given by $\mathfrak{R}$. There is a natural explanation to understand, at least intuitively, why $\mathfrak{R}$ appears in both limits. For that we need to present another result,  first we introduce $(W_x, x \in \R)$  the \emph{shifted difference of potential},
\begin{align}
	\forall x\in\R,\ W_x(\cdot):=W(x+\cdot)-W(x). \label{shdipo}
\end{align}

\begin{theoreme}\label{limL}Let $K>0$, $r\in(0,1)$. For all $\delta>0$,
\begin{displaymath}
\lima \Pp\left(\sup_{-K\leq x\leq K}\left|\frac{L_X(e^{\alpha},m_\alpha+x)}{e^{\alpha}}\frac{\int_{a_{\alpha r}}^{b_{\alpha r}}e^{-W_{m_\alpha}(y)}\ud y}{e^{-W_{m_\alpha}(x)}}-1\right|\leq \delta \right)=1
\end{displaymath}
where for any $\theta>0$,
\begin{align*}
	& a_{\theta}=a_{\theta}(W_{m_\alpha}):=\sup\left\{x\leq0/W_{m_\alpha}(x)\geq \theta\right\}\textrm{ and} \\
& b_{\theta}=b_{\theta}(W_{m_\alpha}):=\inf\left\{x\geq0/W_{m_\alpha}(x)\geq \theta\right\}, 
\end{align*}
\textrm{see also Figure \ref{fig1}}.	
\end{theoreme}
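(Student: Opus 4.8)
The plan is to work conditionally on the environment $W$, reduce the statement to a concentration estimate for the occupation measure of a diffusion confined to the standard $\alpha$--valley, and identify the limiting profile with the normalized speed measure of the inner valley $m_\alpha+[a_{\alpha r},b_{\alpha r}]$. I would first record the classical Brownian representation of $L_X$: writing $S(x)=\int_0^x e^{W(u)}\ud u$ for the scale function, $S(X)$ is a time--changed Brownian motion $\mathcal B$ and one has an identity of the form
\begin{equation*}
L_X(t,z)=e^{-W(z)}\,L_{\mathcal B}\!\left(T_t,S(z)\right),\qquad T_t=\int_0^t e^{2W(X_s)}\ud s,\quad t=\int_0^{T_t}e^{-2W(S^{-1}(\mathcal B_u))}\ud u .
\end{equation*}
In particular $L_X(e^\alpha,m_\alpha+x)/e^{-W_{m_\alpha}(x)}=e^{-W(m_\alpha)}L_{\mathcal B}(T_{e^\alpha},S(m_\alpha+x))$, so the quantity to control is a Brownian local time, and the continuity of $L_{\mathcal B}$ in its space variable will supply the joint continuity in $x$ needed to pass from a pointwise estimate to a uniform one on $[-K,K]$. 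Equivalently, the target becomes: with $\Pp$--probability $\to1$, the normalized occupation measure $L_X(e^\alpha,\cdot)/e^\alpha$ restricted to a fixed neighbourhood of $m_\alpha$ is within relative error $\delta$ of $e^{-W_{m_\alpha}(\cdot-m_\alpha)}\big/\int_{a_{\alpha r}}^{b_{\alpha r}}e^{-W_{m_\alpha}(y)}\ud y$.

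Next I would carry out the standard valley reduction using Brox--type hitting--time estimates for diffusions in a potential (as in \cite{Brox,HuShi1}; see also \cite{Shi1}): with $\Pp$--probability $\to1$, (i) $X$ reaches $m_\alpha$ before time $e^{\alpha(1-\varepsilon)}$, so the initial segment is negligible; (ii) $X$ does not leave a slightly enlarged standard valley, of depth $\alpha(1+\varepsilon)$ around $m_\alpha$, before time $e^\alpha$; and (iii) the time spent up to $e^\alpha$ outside the inner valley $m_\alpha+[a_{\alpha r},b_{\alpha r}]$ is $o(e^\alpha)$, because on the complementary region $W_{m_\alpha}\ge\alpha r$, so the speed measure there is at most a polynomial in $\alpha$ times $e^{-\alpha r}$. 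This last point is also why $\int_{a_{\alpha r}}^{b_{\alpha r}}e^{-W_{m_\alpha}}=(1+o(1))\int_{\text{enlarged valley}}e^{-W_{m_\alpha}}$, so that the precise value $r\in(0,1)$ is irrelevant for the normalization while buying a gap between the equilibration time and $e^\alpha$. On this event one replaces $X$ by the diffusion $\widehat X$ on the enlarged valley reflected at its endpoints, at the cost of a relative $o(1)$ in $L_X(e^\alpha,m_\alpha+x)$ for $x\in[-K,K]$.

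The core step I would attack by a second--moment argument, in the spirit of the discrete proof of \cite{GanPerShi}: concentration of $L_{\widehat X}(e^\alpha,m_\alpha+x)$ about its quenched mean. Conditionally on $W$, $\widehat X$ is positive recurrent on a compact interval with invariant probability proportional to $e^{-W_{m_\alpha}(y-m_\alpha)}\ud y$, and its quenched moments $\E[L_{\widehat X}(t,y)]$ and $\E[L_{\widehat X}(t,y)^2]$ are computed from its Green function, which is explicit in terms of integrals of $e^{W}$ and $e^{-W}$. The decisive input is that the relaxation time of $\widehat X$ inside the inner valley is at most $e^{\alpha r}$ times a polynomial in $\alpha$ --- it is governed by the deepest sub--valley of $W_{m_\alpha}$ inside $(a_{\alpha r},b_{\alpha r})$, whose depth is $<\alpha r$ --- hence it is $\ll e^\alpha$; this yields $\E[L_{\widehat X}(e^\alpha,m_\alpha+x)]=(1+o(1))\,e^\alpha\,e^{-W_{m_\alpha}(x)}\big/\int_{a_{\alpha r}}^{b_{\alpha r}}e^{-W_{m_\alpha}}$ together with a variance bound of smaller order, uniformly in $x\in[-K,K]$, on an event of $\Pp$--probability $\to1$ describing a non--degenerate standard valley of the Brownian environment (controlled via its explicit law, cf.\ \cite{NevPit}). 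A Chebyshev estimate along a grid of $x$--values with polynomially small mesh, combined with the continuity in $x$ coming from the Brownian representation, then gives the uniform conclusion.

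The hard part will be the quenched variance bound with the required uniformity in $x$ \emph{and} over environments in a high--probability event: this needs sufficiently sharp, environment--dependent estimates on the Green function and on Laplace transforms of hitting times of the confined diffusion. Intertwined with it is the bookkeeping that keeps the three time scales correctly ordered --- entrance to the valley $\ll$ equilibration inside the inner valley $\ll e^\alpha\ll$ escape from the enlarged valley --- for which the precise geometry of the standard $\alpha$--valley of Brownian motion (the widths $a_\theta,b_\theta$ and the depths of its sub--valleys) must be quantified.
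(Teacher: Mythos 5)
Your route is genuinely different from the paper's. The paper never performs a quenched mean/variance (Chebyshev) analysis of the occupation measure and never introduces a reflected diffusion: it works with the rescaled diffusion $X_\alpha$ in the environment $\alpha W$ and first evaluates the local time at the \emph{random} time $\sigma_{X_\alpha}(e^{\alpha h(\alpha)},m)$, the inverse local time at the valley bottom. At that time everything is an exact identity in law: through the scale/time-change representation (\ref{eqLxa}) and Brownian local-time scaling, the profile becomes $L_B(\sigma_B(1),\widetilde s_\alpha(\cdot))$ and the time becomes $e^{\alpha h(\alpha)}\int e^{-\alpha W_m(x)}L_B(\sigma_B(1),\widetilde s_\alpha(x))\,\ud x$; since $\widetilde s_\alpha$ collapses the inner valley to a vanishing neighbourhood of $0$, concentration comes for free from the a.s. continuity of $y\mapsto L_B(\sigma_B(1),y)$ at $0$ (Lemmata \ref{eqLtl} and \ref{eqtl}), the pre-hitting contributions being handled separately (Lemmata \ref{eqLtm1} and \ref{eqtm}, the latter via the first Ray--Knight theorem). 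The passage from this random time to the deterministic time $e^{\alpha h(\alpha)}$ is a monotonicity sandwich with adjusted levels $h^{\pm}$ (Proposition \ref{cle}), and the passage from $X_\alpha$ at time $\alpha^{-4}e^{\alpha}$ to $X$ at time $e^{\alpha}$ is the invariance of $\mathcal{P}$ under $W\mapsto W^{\alpha}$ together with Lemma \ref{egx}. Your plan (Green-function means for a confined diffusion, a relaxation-time comparison, a second-moment bound, grid plus continuity) is closer in spirit to the discrete case of \cite{GanPerShi}; if completed it would give quantitative ergodic information the paper does not need, but the decisive estimate is exactly the one you leave open.

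Two concrete gaps. First, your justification for neglecting the region outside the inner valley is wrong as stated: the definitions $a_{\alpha r}=\sup\{x\leq 0:\ W_{m_\alpha}(x)\geq \alpha r\}$ and $b_{\alpha r}=\inf\{x\geq 0:\ W_{m_\alpha}(x)\geq \alpha r\}$ only force $W_{m_\alpha}<\alpha r$ \emph{inside} $(a_{\alpha r},b_{\alpha r})$; to the left of $a_{\alpha r}$ or right of $b_{\alpha r}$ the potential may dip back well below $\alpha r$ (it is only bounded below by $0$ on the standard valley), so the deterministic bound ``speed measure $\leq$ poly$(\alpha)\,e^{-\alpha r}$ there'' fails. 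What is true, and what the paper actually uses (end of the proof of Lemma \ref{eqtl}, and Lemma \ref{eqint}), is the almost-sure, environment-dependent statement that the minimum of the potential over $I_r=[p,a_r]\cup[b_r,q]$ is strictly positive on the scale $\alpha$; this suffices both for the negligible outer occupation time and for the equivalence of the normalizations over the inner and the full valley, but your quantitative claim must be replaced by it. Second, the heart of your argument --- $\E[L_{\widehat X}(e^{\alpha},m_\alpha+x)]=(1+o(1))e^{\alpha}\bar{\mathfrak{R}}(\alpha,x)$ together with a variance of smaller order, uniformly in $x\in[-K,K]$ and over a high-probability set of environments --- is asserted rather than proved, and you yourself flag it as the hard part; in addition, your event (i) (hitting $m_\alpha$ before $e^{\alpha(1-\varepsilon)}$) does not have probability tending to $1$ for fixed $\varepsilon$ (the rescaled ascent can be arbitrarily close to $1$), and smallness of the hitting time does not by itself bound the local time accumulated at $m_\alpha+x$ before it, which is why the paper proves that negligibility separately (Lemma \ref{eqLtm1}). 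None of this looks fatal, but as written the core concentration step remains a programme rather than a proof.
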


We can now explain why $\mathfrak{R}$ appears in the paper of H. Tanaka \cite{Tanaka2} and the present one. The important term in the last result 
is the following:
$$
\bar{\mathfrak{R}}(\alpha,x):=\frac{e^{-W_{m_\alpha}(x)}}{\int_{a_{\alpha r}}^{b_{\alpha r}}e^{-W_{m_\alpha}(y)}\ud y},
$$
we show in Section \ref{S3.3} that the process $(\bar{\mathfrak{R}}(\alpha,x),\ x \in \R)$  converges weakly when $\alpha$ goes to infinity to $({\mathfrak{R}}(x),\ x \in \R)$, so Theorem \ref{limL} leads to Theorem \ref{cvloi}. We therefore focus on $\bar{\mathfrak{R}}(\alpha,x)$. Note  that $(\bar{\mathfrak{R}}(\alpha,x),\ x\in (a_{\alpha r},b_{\alpha r}))$ can be seen as a local (in time) invariant probability measure for the process $X$. Indeed until the instant $e^{\alpha}$, $(X(s)\equiv X(s,W),\ s \leq e^{\alpha})$ spends, with a high probability, most of its time between the two points $(a_{\alpha r},b_{\alpha r})$. So $X$ can be approached, in some sense, by a simpler process $(\tilde{X}(s)\equiv \tilde{X}(s,\alpha W),s \leq e^{\alpha})$ with the same generator as $X$ but reflected at fixed barriers, $\tilde{a}$ and $\tilde{b}$ (see \cite{Brox} or \cite{Tanaka2} page 159). This new process obviously possesses an invariant probability measure given by $$\tilde{\mu}_{\alpha}(dx):=\tilde{\mathfrak{R}}(\alpha,x)dx:=\frac{e^{-\alpha W_{m_1}(x)}}{\int_{\tilde{a}}^{\tilde{b}}e^{-\alpha W_{m_1}(y)}\ud y}dx.$$ 
And $\tilde{\mathfrak{R}}(\alpha,x)$ is naturally involved in the limit behavior of the normalized local time $L_{\tilde{X}}(e^{\alpha},m_1+\cdot)/e^{\alpha}$ and also in the limit distribution of $\tilde{X}(e^{\alpha})-m_1$.  To turn back to $\bar{\mathfrak{R}}$ a self similarity argument of the environment can be used.
To finish with this discussion it is interesting to notice that for the discrete time model the result in law plays an important role to get the almost sure asymptotic of the limit sup of $L^*_S$. Indeed (\ref{GPS}) (together with  Lemmata 3.1 and 3.2 in \cite{GanPerShi}) leads to 
$$ \limsup_n \frac{L_S^*(n)}{n}=\textrm{const} \in ]0,+\infty[,\ \Pp\text{-a.s.}$$
and the constant is known explicitly.  
For Brox's model, $L_X^*(t)$ is possibly larger than $t$ and, in fact, if we use a similar argument than \cite{GanPerShi}, then the limit in law (Theorem \ref{cvloi}) only implies $$ \limsup_t \frac{L^*_X(t)}{t}=+\infty,\ \Pp\text{-a.s.}$$ 
which is weaker than the result of Z. Shi \cite{Shi}: $$ \limsup_{t\to\infty} \frac{L^*(t)}{t\log \log \log t} \geq \textrm{const} \in ]0,+\infty] , \ \Pp\text{-a.s.}$$

\subsection{Representation of the diffusion in potential $W$}
In this section we recall basic definitions and tools traditionally used to study diffusion in random environment. Define the stochastic process 
\begin{equation}
\begin{array}{rcl}
X:\mathcal{W}\times\Omega &\longrightarrow& \Omega\\
(W,B)&\longmapsto& S_W^{-1}\circ B\circ T_W^{-1}
\end{array}\label{eqx}
\end{equation}
where
\begin{equation}
 	\forall x \in \R,\ S_W(x):=\int_0^xe^{W(y)}\ud y, \label{eqS}
\end{equation}
and
\begin{equation}
\forall t\geq 0,\ T_W(t):=\int_0^te^{-2W(S_W^{-1}(B(s)))}\ud s.\label{eqT}
\end{equation}
As Brox points out in \cite{Brox}, the standard diffusion theory implies that this process is under $\Pp$ a diffusion in Brownian environment. To simplify notations, we write when there is no possible mistake $S$ and $T$ for respectively $S_W$ and $T_W$. Using Formula $(\ref{eqx})$, we easily obtain that for any $x\in\R$ and $t\geq0$,
\begin{align}
	L_X(t,x)=e^{-W(x)}L_B(T^{-1}(t),S(x)) \label{tsc}
\end{align}
where $L_B$ is the local time process of the Brownian motion $B$. 

Brox \cite{Brox} noticed also that it is more convenient to study the asymptotic behaviors of the process $X_\alpha(W,\cdot):=X(\alpha W,\cdot)$ and of its local time process $L_{X_\alpha}$ instead of $X$ and $L_X$. For all $x\in\R$, we denote 
$$W^\alpha(x):=\frac{1}{\alpha}W(\alpha^2x).$$ 
For $\alpha>0$ fixed, there is a link between  $X_{\alpha}$ and $X$ given by:
\begin{lemme}\label{egx}
For each $W\in\mathcal{W}$ and $\alpha>0$,
\begin{align*}
& \left(X_{\alpha}(W^\alpha,t)\right)_{t\geq0}:= \left(X(\alpha W^\alpha,t)\right)_{t\geq0} \eglw \left(\frac{1}{\alpha^2}X(W,\alpha^4t)\right)_{t\geq0},\\
& \left(L_{X_\alpha(W^\alpha,\cdot)}(t,x)\right)_{t\geq0,x\in\R} \eglw \left(\frac{1}{\alpha^2}L_{X(W,\cdot)}(\alpha^4t,\alpha^2x)\right)_{t\geq0,x\in\R}.
\end{align*}
\end{lemme}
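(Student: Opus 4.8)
The plan is to reduce everything to ordinary Brownian scaling applied to the representation \eqref{eqx}. Set $V:=\alpha W^\alpha$, so that $V(x)=W(\alpha^2x)$ for every $x\in\R$ and $X_\alpha(W^\alpha,\cdot)=X(V,\cdot)=S_V^{-1}\circ B\circ T_V^{-1}$. I would first express the scale and the clock of the environment $V$ in terms of those of $W$: the substitution $u=\alpha^2y$ in \eqref{eqS} gives $S_V(x)=\alpha^{-2}S_W(\alpha^2x)$, whence $S_V^{-1}(y)=\alpha^{-2}S_W^{-1}(\alpha^2y)$ and, in particular, $V\big(S_V^{-1}(z)\big)=W\big(S_W^{-1}(\alpha^2z)\big)$ for all $z$.

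Next, introduce the time-changed driving motion $\widetilde B(r):=\alpha^2B(\alpha^{-4}r)$, which by Brownian scaling is again a standard one-dimensional Brownian motion, so that $\widetilde B\eglw B$ under $P$, and note that conversely $\alpha^2B(s)=\widetilde B(\alpha^4s)$. Substituting this identity into \eqref{eqT} and then putting $r=\alpha^4s$ yields
\begin{align*}
T_V(t)=\int_0^t e^{-2W(S_W^{-1}(\alpha^2B(s)))}\,\ud s=\alpha^{-4}\int_0^{\alpha^4t}e^{-2W(S_W^{-1}(\widetilde B(r)))}\,\ud r=\alpha^{-4}\,\widetilde T(\alpha^4t),
\end{align*}
where $\widetilde T$ denotes the clock \eqref{eqT} built from the environment $W$ and the Brownian motion $\widetilde B$; inverting gives $T_V^{-1}(u)=\alpha^{-4}\widetilde T^{-1}(\alpha^4u)$. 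Plugging the three identities into \eqref{eqx} we obtain the pathwise relation
\begin{align*}
X(V,B)(t)=\alpha^{-2}S_W^{-1}\big(\alpha^2B(T_V^{-1}(t))\big)=\alpha^{-2}S_W^{-1}\big(\widetilde B(\widetilde T^{-1}(\alpha^4t))\big)=\alpha^{-2}\,X(W,\widetilde B)(\alpha^4t).
\end{align*}
Since $X(W,\cdot)$ is a deterministic measurable functional of the Brownian path and $\widetilde B\eglw B$, taking laws under $P$ proves the first claimed identity.

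For the local time I would feed this pathwise relation directly into the occupation time formula: for bounded Borel $f$, writing $Y(t):=\alpha^{-2}X(W,\widetilde B)(\alpha^4t)$ and substituting $u=\alpha^4s$ and then $z=\alpha^{-2}y$,
\begin{align*}
\int_0^t f(Y(s))\,\ud s=\alpha^{-4}\int_0^{\alpha^4t}f\big(\alpha^{-2}X(W,\widetilde B)(u)\big)\,\ud u=\alpha^{-2}\int_\R f(z)\,L_{X(W,\widetilde B)}(\alpha^4t,\alpha^2z)\,\ud z,
\end{align*}
so by uniqueness of the jointly continuous local time, $L_{X(V,B)}(t,x)=\alpha^{-2}L_{X(W,\widetilde B)}(\alpha^4t,\alpha^2x)$ for all $t\ge0$, $x\in\R$; passing to laws under $P$ via $\widetilde B\eglw B$ once more gives the second identity. (Alternatively, the same conclusion follows from \eqref{tsc} combined with the Brownian local time scaling $L_B(t,x)=\alpha^{-2}L_{\widetilde B}(\alpha^4t,\alpha^2x)$, which is itself an occupation-time computation.) The computations are routine; the only points to watch are the bookkeeping of the powers of $\alpha$ through the successive changes of variables, and the observation — which presents no real difficulty — that, both $X$ and its local time being deterministic functionals of the Brownian path, the pathwise identities in $\widetilde B$ upgrade to equalities in law under $P$ as soon as $\widetilde B\eglw B$.
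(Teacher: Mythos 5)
Your proposal is correct. The paper itself gives no proof of this lemma: it refers to Brox (Lemma 1.3 of \cite{Brox}) for the first relation and declares the second a straightforward consequence of the first. Your computation is exactly the standard scaling argument behind that reference — expressing $S_{\alpha W^\alpha}$ and $T_{\alpha W^\alpha}$ in terms of $S_W$, $T_W$ and the rescaled Brownian motion $\widetilde B(r)=\alpha^2B(\alpha^{-4}r)$, obtaining the pathwise identity $X(\alpha W^\alpha,B)(t)=\alpha^{-2}X(W,\widetilde B)(\alpha^4 t)$, and then using $\widetilde B\eglw B$ under $P$ — and your derivation of the local time relation via the occupation time formula and uniqueness of the jointly continuous local time is precisely the "straightforward consequence" the paper alludes to, made explicit and with the powers of $\alpha$ correctly tracked.
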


We do not give any detail on the proof of this lemma, the first relation can be found in Brox (see \cite{Brox}, Lemma 1.3) and the second is a straightforward consequence of the first one. Formulas $(\ref{eqS})$, $(\ref{eqT})$ and $(\ref{eqx})$ have their equivalent for $X_\alpha$:
\begin{equation}
\forall t \geq 0,\ X_\alpha(t)=S_\alpha^{-1}(B(T_\alpha^{-1}(t)))\label{eqxa}
\end{equation}
where
\begin{equation}
	\forall x \in \R,\ S_\alpha(x):=S_{\alpha W}(x)=\int_0^xe^{\alpha W(y)}\ud y
\end{equation}
and
\begin{equation}
\forall t\geq 0,\ T_\alpha(t):=T_{\alpha W}(t)=\int_0^te^{-2\alpha W(S_\alpha^{-1}(B(s)))}\ud s.
\end{equation}
And for the local time we have,
\begin{equation}
	\forall t\geq 0,\ \forall x \in \R,\ L_{X_\alpha}(t,x)=e^{-\alpha W(x)}L_B(T_\alpha^{-1}(t),S_\alpha(x)).\label{eqLxa}
\end{equation}

The rest of the paper is organized  as follows: in the first part of Section $\ref{S2}$ we get the asymptotic of the local time at specific random time, which is the inverse of the local time at $m_{\log t}$, in Section $\ref{2.2}$ the asymptotic of the inverse of the local time itself is studied. Note that Sections \ref{2.1} and \ref{2.2} can be read independently. Propositions \ref{limLT} and \ref{limT} of Sections \ref{S2} are the key results to get  Theorem \ref{limL} proved at the beginning  of Section \ref{S3}.  In the second and third subsection of Section \ref{S3},  Theorems \ref{thlimsup} and \ref{cvloi} are proved.

\section{Asymptotics for the local time and its inverse \label{S2}}

We begin with some definitions that are used all along the paper.
For any process $M$, we define the following stopping times with the usual convention $\inf\emptyset=+\infty$, 
\begin{align}
& \forall x\in\R,\ \tM(x) :=\inf\{t\geq0/M(t)=x\}, \\
& \forall x\in\R,\ \forall r\geq0,\ \sigma_M(r,x):=\inf\{t\geq0/L_M(t,x)\geq r\}. \label{invloctim} 
\end{align}
We define for any $W\in\mathcal{W}$ and for all $x,y\in\R$,
$$\Wb(x,y):=\left\{ \begin{array}{ll}
\sup_{[x,y]}W & \textrm{if $y\geq x$},\\
\sup_{[y,x]}W & \textrm{if $y<x$},
\end{array}
\right.
$$
and
$$\Wu(x,y):=\left\{ \begin{array}{ll}
\inf_{[x,y]}W & \textrm{if $y\geq x$},\\
\inf_{[y,x]}W & \textrm{if $y<x$},
\end{array}
\right.
$$
they represent respectively the maximum and the minimum of $W$ between $x$ and $y$.
 Finally, we introduce the process starting in $x\in\R$,
\begin{displaymath}
\left(X_\alpha^x(W,t),\ t\geq0\right):=\left(x+X_\alpha(W_x,t),\ t\geq0\right)=\left(x+X(\alpha W_x,t),\ t\geq0\right)
\end{displaymath}
where $W_x$ is the \emph{shifted difference of potential} (see (\ref{shdipo})). Notice that we have the equivalent of (\ref{eqxa}): 
\begin{equation}
\forall t \geq 0, X_\alpha^x(t)=x+(S^x_\alpha)^{-1}(B((T^x_\alpha)^{-1}(t)))\label{eqxax}
\end{equation}
where $S^x_\alpha:=S_{\alpha W_x}$, $T^x_\alpha:=T_{\alpha W_x}$ and it is easy to establish that for a fixed $W\in\mathcal{W}$, $X_\alpha^x$ is a strong Markov process.

\subsection{Asymptotic behavior of $L_{X_\alpha}$ at time $\sigma_{X_\alpha}(e^{\alpha h(\alpha)},m)$ \label{2.1}}
In this first sub-section we study the asymptotic behavior of the local time at the inverse of the local time  in $m:=m_1$, recall that $m_1$ is the coordinate of the bottom of the standard valley defined page \pageref{stdval}.
\begin{proposition}\label{limLT} Let $K>0$, $W\in\mathcal{W}$ and let $h$ be a function such that $\limab h(\alpha)=1$. Then, for all $\delta>0$,
\begin{displaymath}
\lima P\left(\sup_{-K\leq x\leq K}
\left|\frac{L_{X_\alpha}(\tlam,m+\alpha^{-2}x)}{e^{\alpha h(\alpha)-\alpha W_m(\alpha^{-2}x)
}}-1\right|\leq \delta \right)=1.
\end{displaymath}
\end{proposition}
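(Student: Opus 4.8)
The plan is to reduce the statement, via the Brownian representation (\ref{eqLxa}) of $L_{X_\alpha}$, to an estimate on the local time of one standard Brownian motion sampled at an inverse local time, and then to invoke the first and second Ray--Knight theorems together with Doob's maximal inequality. Fix $W\in\mathcal{W}$ and set $a=a(\alpha):=S_\alpha(m)$, $u=u(\alpha):=e^{\alpha h(\alpha)+\alpha W(m)}$. By (\ref{eqLxa}), $L_{X_\alpha}(t,m)=e^{-\alpha W(m)}L_B(T_\alpha^{-1}(t),a)$, and since $t\mapsto L_B(t,a)$ is continuous and nondecreasing, $L_{X_\alpha}(\cdot,m)$ reaches $e^{\alpha h(\alpha)}$ exactly when $L_B(\cdot,a)$ reaches $u$, so $T_\alpha^{-1}(\tlam)=\sigma_B(u,a)$ (the inverse Brownian local time at $a$, $P$-a.s.\ finite). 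Substituting this in (\ref{eqLxa}) at the point $m+\alpha^{-2}x$ and writing $e^{\alpha h(\alpha)-\alpha W_m(\alpha^{-2}x)}=u\,e^{-\alpha W(m+\alpha^{-2}x)}$, the factor $e^{-\alpha W(m+\alpha^{-2}x)}$ cancels and the Proposition becomes
\[
\lima P\Bigl(\sup_{b\in J_\alpha}\bigl|u^{-1}L_B(\sigma_B(u,a),b)-1\bigr|\le\delta\Bigr)=1,
\]
where $J_\alpha:=S_\alpha\bigl([m-\alpha^{-2}K,m+\alpha^{-2}K]\bigr)$ is an interval containing $a$ of length $\ell_\alpha:=\int_{m-\alpha^{-2}K}^{m+\alpha^{-2}K}e^{\alpha W}$. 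By continuity of $W$ at $m$, $\ell_\alpha\le 2\alpha^{-2}K\,e^{\alpha(W(m)+\varepsilon_\alpha)}$ with $\varepsilon_\alpha\to0$, hence $\ell_\alpha/u\le 2\alpha^{-2}K\,e^{\alpha(\varepsilon_\alpha-h(\alpha))}\to0$ because $\limab h(\alpha)=1$. So it remains to prove this Brownian statement, in which the window $J_\alpha$ around $a$ has length $\ell_\alpha=o(u)$ while $u\to+\infty$.

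Let $\tau_B(a)$ be the ($P$-a.s.\ finite) first hitting time of $a$ by $B$, put $\tilde B(\cdot):=B(\tau_B(a)+\cdot)$, a standard Brownian motion started at $a$ and independent of $\mathcal{F}_{\tau_B(a)}$, and $\tilde\tau_u:=\inf\{s\ge0/L_{\tilde B}(s,a)\ge u\}$. Since $L_B(\tau_B(a),a)=0$ we have $\sigma_B(u,a)=\tau_B(a)+\tilde\tau_u$ and, by additivity of local time over time intervals, $L_B(\sigma_B(u,a),b)=L_B(\tau_B(a),b)+L_{\tilde B}(\tilde\tau_u,b)$ for every $b$, the two terms being independent. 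By the second Ray--Knight theorem applied to $\tilde B$, the processes $\bigl(L_{\tilde B}(\tilde\tau_u,a+s)\bigr)_{s\ge0}$ and $\bigl(L_{\tilde B}(\tilde\tau_u,a-s)\bigr)_{s\ge0}$ are two independent squared Bessel processes of dimension $0$ started from $u$. By the first Ray--Knight theorem applied to $B$ and $\tau_B(a)$, the field $L_B(\tau_B(a),\cdot)$ vanishes on the side of $a$ away from $0$ and, between $0$ and $a$, read from $a$ towards $0$, it is a squared Bessel process $Z$ of dimension $2$ started from $0$; in particular, for $\alpha$ large enough, $\sup_{b\in J_\alpha}L_B(\tau_B(a),b)\le\sup_{0\le t\le\ell_\alpha}Z_t$ (and this first term is simply $0$ when $m=0$).

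For a squared Bessel process $Q$ of dimension $0$ started from $u$, $Q-u$ is a martingale with $\E\bigl[(Q_s-u)^2\bigr]=4us$, so Doob's $L^2$ inequality gives $\E\bigl[\sup_{s\le\ell_\alpha}(Q_s-u)^2\bigr]\le16u\ell_\alpha$ and $P\bigl(\sup_{s\le\ell_\alpha}|Q_s-u|>\delta u\bigr)\le16\ell_\alpha/(\delta^2u)\to0$; applied to the two Bessel processes above this controls $L_{\tilde B}(\tilde\tau_u,b)$ uniformly over $b\in J_\alpha$. Likewise $Z_t-2t$ is a martingale with $\E\bigl[(Z_{\ell_\alpha}-2\ell_\alpha)^2\bigr]=4\ell_\alpha^2$, so by Doob $\E\bigl[\sup_{t\le\ell_\alpha}(Z_t-2t)^2\bigr]\le16\ell_\alpha^2$ and $P\bigl(\sup_{t\le\ell_\alpha}Z_t>\delta u\bigr)\le16\ell_\alpha^2/(\delta u-2\ell_\alpha)^2\to0$ since $\ell_\alpha=o(u)$. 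Plugging these bounds into $|u^{-1}L_B(\sigma_B(u,a),b)-1|\le u^{-1}|L_{\tilde B}(\tilde\tau_u,b)-u|+u^{-1}L_B(\tau_B(a),b)$ yields the displayed limit of the first step, and hence the Proposition.

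The only delicate point is the control of the ``approach'' contribution $L_B(\tau_B(a),\cdot)$ near $b=a$, which must be shown negligible with respect to $u$: this is exactly where the first Ray--Knight theorem enters, identifying that contribution near $a$ with a dimension-$2$ squared Bessel process started from $0$ run for a time $\ell_\alpha=o(u)$, hence of order $\ell_\alpha$. The remaining ingredients — the comparison $\ell_\alpha=o(u)$, resting on continuity of $W$ at $m$ against $h(\alpha)\to1$, and the uniformity over the compact $x$-range, handled by Doob's inequality — are routine.
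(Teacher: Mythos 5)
Your argument is correct, and it follows the paper's skeleton while replacing its key technical ingredients by different ones. Structurally, your split $L_B(\sB(u,a),b)=L_B(\tB(a),b)+L_{\tilde B}(\tilde\tau_u,b)$ is exactly the paper's decomposition of $L_{X_\alpha}(\tlam,\cdot)$ into the contribution before $\tau_{X_\alpha}(m)$ and the contribution after (strong Markov property), transported to the Brownian side through the representation $(\ref{eqLxa})$ and the identity $T_\alpha^{-1}(\tlam)=\sB(u,a)$ — the same identity $\sigma_{X_\alpha}(t)=T_\alpha(\sigma_B(t))$ the paper uses in Lemma \ref{eqLtl}. Where you genuinely diverge is in the treatment of the two pieces: the paper normalizes by Brownian scaling ($(\ref{eqsca1})$, $(\ref{eqsca2})$), reducing everything to the fixed fields $L_B(\tB(1),\cdot)$ and $L_B(\sB(1),\cdot)$, and then concludes with soft almost-sure facts (finiteness of $\sup_{y\le1}L_B(\tB(1),y)$, continuity of $y\mapsto L_B(\sB(1),y)$ at $0$ where the value is $1$); Ray--Knight only appears later in the paper (Lemma \ref{eqtm}), and the second Ray--Knight theorem not at all. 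You instead keep the diverging level $u=e^{\alpha h(\alpha)+\alpha W(m)}$, identify the two local-time fields via the first and (two-sided) second Ray--Knight theorems, and beat the window of length $\ell_\alpha=o(u)$ with Doob's $L^2$ inequality. What each buys: the paper's route is shorter and needs no moment computations, while yours is quantitative — it gives explicit error probabilities of order $\ell_\alpha/u$, hence exponentially small, and makes transparent that the only mechanism at work is ``spatial window negligible compared with accumulated local time''. Three harmless remarks: your final estimate yields $2\delta$ rather than $\delta$ (irrelevant since $\delta$ is arbitrary); the independence of the two $\mathrm{BESQ}^0$ branches is not actually needed, a union bound over the two sides suffices; and the inequality $\sup_{b\in J_\alpha}L_B(\tB(a),b)\le\sup_{t\le\ell_\alpha}Z_t$ should be read as a bound in law, legitimate here because $a$, $u$ and $J_\alpha$ are deterministic once $W$ and $\alpha$ are fixed and because, $m$ being fixed, $J_\alpha$ stays on one side of the origin for $\alpha$ large (the case $m=0$ being trivial, as you note).
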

\begin{proof}
For simplicity, we assume 
that $m=m_1(W)\geq0$
and to lighten notations, we denote for all $x\in [-K,K],\ \mx:=m+\alpha^{-2}x$. 

 The proof is based on the decomposition of the local time into two terms, the first one is the contribution of the local time in $\mx$ before $\ta(m)$ (the first time $X_\alpha$ hits $m$) and the second one is the contribution of the local time between $\ta(m)$ and $\tlam$ (the inverse of the local time in $m$):
\begin{align*}
L_{X_\alpha}(\tlam,\mx)&=L_{X_\alpha}(\ta(m),\mx)\\
&+\left(L_{X_\alpha}(\tlam,\mx)-L_{X_\alpha}(\ta(m),\mx)\right).
\end{align*}
We treat these two terms in Lemmata \ref{eqLtm1} and \ref{eqLtl} below.  Lemma $\ref{eqLtm1}$ states that, asymptotically, the local time in a point $\mx$ until the process reaches $m$ is negligible compared to $e^{\alpha h(\alpha)-\alpha W_m(\alpha^{-2}x)}$. 
Thanks to the strong Markov property for $X_\alpha$, it remains to study the asymptotic behavior of
$$\left(L_{X_\alpha^m}(\sam,m+\ax)\right)_{-K\leq x\leq K}$$ when $\alpha$ goes to infinity, this is what is done in  Lemma \ref{eqLtl}: it says that the local time of $X_\alpha^m$ at $\mx$ within the interval of time $[0,\tlam]$ is of the order of $e^{\alpha h(\alpha)-\alpha W_m(\alpha^{-2}x)}$. 
\end{proof}
We now state and prove
\begin{lemme}\label{eqLtm1}
Let $W\in\mathcal{W}$, for any $\delta>0$,
\begin{displaymath}
\lima P\left(\sup_{-K\leq x\leq K}\frac{L_{X_\alpha}(\ta(m),\mx)}{e^{\alpha h(\alpha)-\alpha W_m(\alpha^{-2}x)}}\leq \delta \right)=1.
\end{displaymath}
\end{lemme}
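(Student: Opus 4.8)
The plan is to use the time-substitution representation \eqref{eqLxa} to turn the statement about $L_{X_\alpha}(\ta(m),\mx)$ into a statement about Brownian local time, and then bound the latter using standard estimates together with the properties of the standard valley. Recall from \eqref{eqLxa} that
\begin{displaymath}
L_{X_\alpha}(\ta(m),\mx)=e^{-\alpha W(\mx)}L_B\!\left(T_\alpha^{-1}(\ta(m)),S_\alpha(\mx)\right),
\end{displaymath}
and that $T_\alpha^{-1}(\ta(m))$ is precisely the first time the Brownian motion $B$ hits the point $S_\alpha(m)$. So writing $s:=S_\alpha(m)$ and using the spatial homogeneity of Brownian local time, the quantity we must control is $e^{-\alpha W(\mx)}L_B(\tB(s),S_\alpha(\mx))$ uniformly over $x\in[-K,K]$, i.e. the local time accumulated by $B$ at the nearby level $S_\alpha(\mx)$ before $B$ first reaches $s$.

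First I would reduce to a single Brownian excursion/hitting-time estimate. Since $m\geq 0$ by assumption, $S_\alpha(m)\geq 0$; depending on the sign of $\mx-m=\alpha^{-2}x$, the point $S_\alpha(\mx)$ lies slightly to the left or right of $s$. For levels $u$ that lie strictly between the starting point $0$ and $s$ (the ``left'' case, and the dominant one), $L_B(\tB(s),u)$ is, by the first Ray--Knight theorem, distributed as the value at ``distance $s-u$'' of a squared Bessel-type process started from the local time accumulated before first hitting $u$; more simply, $L_B(\tB(s),u)$ is stochastically dominated by an exponential-type variable and one has the crude bound $P(L_B(\tB(s),u)\geq \lambda)\leq e^{-c\lambda/ s}$ for a universal constant, uniformly in $0\le u\le s$. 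For levels $u$ slightly beyond $s$, $L_B(\tB(s),u)$ is dominated by $L_B(\tB(u),u)$ which is even smaller. Either way, I get a uniform-in-$x$ tail bound of the form $P\big(\sup_{|x|\le K} L_B(\tB(s),S_\alpha(\mx))\ge \lambda\big)\le C(K,\alpha)\,e^{-c\lambda/s}$, after a chaining/union argument over a fine grid in $x$ together with the continuity of $x\mapsto L_B(\tB(s),S_\alpha(\mx))$ (Brownian local time is Hölder in the space variable).

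Next I would plug in the deterministic asymptotics of $S_\alpha$ coming from the structure of the valley. We need to show that, with probability tending to $1$, $e^{-\alpha W(\mx)}L_B(\tB(s),S_\alpha(\mx))\le \delta\, e^{\alpha h(\alpha)-\alpha W_m(\alpha^{-2}x)}=\delta\, e^{\alpha h(\alpha)-\alpha W(\mx)+\alpha W(m)}$, i.e. that $L_B(\tB(s),S_\alpha(\mx))\le \delta\, e^{\alpha h(\alpha)+\alpha W(m)}$. Using the tail bound above with $s=S_\alpha(m)=\int_0^m e^{\alpha W(y)}\,\ud y$, it suffices that $e^{\alpha h(\alpha)+\alpha W(m)}/S_\alpha(m)\to\infty$ fast enough. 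Since $m$ is the bottom of the standard valley, $W(m)\le W(0)=0$ is a strict minimum on $[p_1,q_1]$, and $\int_0^m e^{\alpha W(y)}\,\ud y\le |m|\,e^{\alpha \Wb(0,m)}$ where $\Wb(0,m)=\sup_{[0,m]}W$; because $h(\alpha)\to 1$ and $\Wb(0,m)<1$ (the potential does not rise by a full unit between $0$ and $m$ inside the valley of depth $\geq 1$... more precisely $\Wb(0,m)-W(m)<1$), one gets $\alpha h(\alpha)+\alpha W(m)-\alpha\Wb(0,m)\to+\infty$, so the required ratio blows up exponentially in $\alpha$ and the tail bound kills the probability.

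The main obstacle I anticipate is the uniformity in $x\in[-K,K]$ combined with the fact that $S_\alpha(\mx)$ can straddle $s=S_\alpha(m)$: one must handle both the ``$u<s$'' and ``$u>s$'' regimes and splice them, and must make the chaining argument genuinely uniform rather than pointwise — this requires an a priori modulus-of-continuity estimate for $x\mapsto L_B(\tB(s),S_\alpha(\mx))$ together with control of the oscillation of $S_\alpha$ on the $\alpha^{-2}$-scale (here one uses that $S_\alpha$ is Lipschitz with constant $e^{\alpha\Wb(m-\alpha^{-2}K,\,m+\alpha^{-2}K)}$, and that $W$ varies by only $O(\alpha^{-1})$-ish amounts on an $\alpha^{-2}$-window, so $S_\alpha(\mx)-S_\alpha(m)$ is itself exponentially small and controllable). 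Modulo this technical uniformization — which is routine given the exponential gap produced in the previous paragraph — the lemma follows.
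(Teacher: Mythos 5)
Your opening reduction via \eqref{eqLxa}, and the identification of $T_\alpha^{-1}(\ta(m))$ with $\tB(S_\alpha(m))$, are correct; the difficulty is the bound you apply next. You write the tail estimate $P(L_B(\tB(s),u)\geq\lambda)\leq e^{-c\lambda/s}$ and then need $\lambda/s=\delta e^{\alpha h(\alpha)+\alpha W(m)}/S_\alpha(m)\to\infty$, which, since $S_\alpha(m)$ is of order $e^{\alpha\Wb(0,m)}$, forces $\Wb(0,m)-W(m)<1$. You assert that this holds because ``the potential does not rise by a full unit between $0$ and $m$'', but $A(\Delta_1)<1$ does \emph{not} imply $\Wb(0,m)-W(m)<1$: the inner directed ascent $W^\#(p,m)=\max_z\left(W(z)-\Wu(p,z)\right)$ uses the running minimum $\Wu(p,z)$, which may dip below $W(0)=0$ before the peak on $[0,m]$ is reached, so the ascent registered by $W^\#$ can be strictly smaller than $\Wb(0,m)-W(m)$. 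Concretely, take $W$ with $W(p)=2$, decreasing to $-0.3$ at $x=-0.1$, increasing through $W(0)=0$ to $0.5$ at $x=0.5$, decreasing to $W(m)=-0.6$ at $m=1$, then increasing to $W(q)=1$; this is a standard $1$-valley with $A=0.8<1<D=1.6$ yet $\Wb(0,m)-W(m)=1.1>1$, and your exponent $\lambda/s$ then tends to $0$, so the tail bound says nothing. In addition, the lemma is stated for arbitrary $W\in\mathcal{W}$, not only $W\in\tW$, so no valley hypothesis is even available here.

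The fix you gesture at (``$S_\alpha(\mx)-S_\alpha(m)$ is itself exponentially small'') is the real point: the Ray--Knight tail should be measured in $s-u$, not in $s$, and $|s-u|=\left|\int_{m}^{\mx}e^{\alpha W}\right|$ is of order $\aK\, e^{\alpha\Wb_m(-\aK,0)}$, exponentially smaller than $s$. The paper implements this cleanly with no chaining and no Ray--Knight tail estimate: since $X_\alpha$ starts at $0\leq m$, $L_{X_\alpha}(\ta(m-\aK),\mx)=0$ for all $x\in[-K,0]$, so the quantity to bound equals $L_{X_\alpha}(\ta(m),\mx)-L_{X_\alpha}(\ta(m-\aK),\mx)$, and by the strong Markov property one may restart the diffusion at $\kappa_\alpha=m-\aK$. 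After the scaling identity \eqref{eqsca1} the relevant factor in front of $\sup_{y\leq 1}L_B(\tB(1),y)$ is $S_\alpha^{\kappa_\alpha}(\aK)\leq\aK\, e^{\alpha\Wb_m(-\aK,0)}$, and $\Wb_m(-\aK,0)\to 0$ simply because $W$ is continuous at $m$. This uses no structure of the valley, no modulus of continuity of local time, and no grid/union bound; you should replace the crude $e^{-c\lambda/s}$ estimate and the valley-geometry step by this restart.
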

\begin{proof}
First, as we have assumed that $m\geq 0$, for all $x>0$ $L_{X_\alpha}(\ta(m),\mx)=0$ so we only have to consider  non positive $x$. Notice also that for all $x\in[-K,0]$, $L_{X_\alpha}(\ta(m-\alpha^{-2}K),\mx)=0,$
therefore
\begin{align}
L_{X_\alpha}(\ta(m),\mx)=L_{X_\alpha}(\ta(m),\mx)-L_{X_\alpha}(\ta(m-\alpha^{-2}K),\mx). \label{eq1}
\end{align}
Let $\kappa_\alpha=m-\alpha^{-2}K$. Thanks to (\ref{eq1}) and the strong Markov property for $X_\alpha$, we only need to prove that
\begin{equation}
\lima P\left(\sup_{-K\leq x\leq 0}\frac{L_{X_\alpha^{\kappa_\alpha}}(\tau_{X_\alpha^{\kappa_\alpha}}(m),\mx)}{e^{\alpha h(\alpha)-\alpha W_m(\alpha^{-2}x)}}\leq \delta \right)=1.\label{eqLtm}
\end{equation}
It follows from $(\ref{eqxax})$ with $x=\kappa_\alpha$ that
\begin{align*}
 \tau_{X_\alpha^{\kappa_\alpha}}(m)=\tau_{X_\alpha(W_{\kappa_\alpha},\cdot)}(\alpha^{-2}K)=T_\alpha^{\kappa_\alpha}(\tB(S^{\kappa_\alpha}_\alpha(\alpha^{-2}K))),\label{eqtam}
\end{align*}
so according to $(\ref{eqLxa})$, we have for all $x\in\R$,
\begin{align*}
	&L_{X_\alpha^{\kappa_\alpha}}(\tau_{X_\alpha^{\kappa_\alpha}}(m),\mx)=\\
	&e^{-\alpha W_{\kappa_\alpha}(\alpha^{-2}(x+K))}L_B(\tB(S_\alpha^{\kappa_\alpha}(\alpha^{-2}K)),S_\alpha^{\kappa_\alpha}(\alpha^{-2}(x+K))).
\end{align*}
The classic scaling property of the local time of Brownian motion given by
\begin{equation}
\forall \lambda>0,\ \forall y_1>0,\ \left(\lambda L_B(\tB(y_1),y)\right)_{y\in\R}\eglw\left(L_B(\tB(\lambda y_1),\lambda y)\right)_{y\in\R},\label{eqsca1}
\end{equation}
yields that the processes $\left(L_{X_\alpha^{\kappa_\alpha}}\Big(\tau_{X_\alpha^{\kappa_\alpha}}(m),\mx\Big)\right)_{x\in\R}$
and
\begin{displaymath}
\left(S_\alpha^{\kappa_\alpha}(\alpha^{-2}K)e^{-\alpha W_{\kappa_\alpha}(\alpha^{-2}(x+K))}L_B\Big(\tB(1),s_\alpha( \alpha^{-2} (x+K) )\Big)\right)_{x\in\R}
\end{displaymath}
are equal in law, where $s_\alpha(z):=S_\alpha^{\kappa_\alpha}(z)/S_\alpha^{\kappa_\alpha}(\alpha^{-2}K)$.

We claim that for all $x\in[-K,0]$
\begin{align*}
	&S_\alpha^{\kappa_\alpha}(\alpha^{-2}K)e^{-\alpha W_{\kappa_\alpha}(\alpha^{-2}(x+K))}L_B(\tB(1),s_\alpha(\alpha^{-2}(x+K)))\\
	&\leq 
	\alpha^{-2}Ke^{\alpha(\Wb_m(-\alpha^{-2}K,0)-W_m(\alpha^{-2}x))}\sup_{y\leq1}L_B(\tB(1),y).
\end{align*}
Indeed
\begin{displaymath}
 S_\alpha^{\kappa_\alpha}(\alpha^{-2}K)=\int_0^{\frac{K}{\alpha^2}}e^{\alpha W_{\kappa_\alpha}(y)}\ud y\leq\frac{K}{\alpha^2}e^{\alpha\Wb_{\kappa_\alpha}(0,\alpha^{-2}K)}
\end{displaymath}
and for all $x\in[-K,0]$,
\begin{displaymath}
 \Wb_{\kappa_\alpha}(0,\alpha^{-2}K)-W_{\kappa_\alpha}(\alpha^{-2}(x+K))=\Wb_m(-\alpha^{-2}K,0)-W_m(\alpha^{-2}x)
\end{displaymath}
and $s_\alpha(\alpha^{-2}(x+K))\leq 1$.

Assembling the above estimates, we get for any $\delta>0$,
\begin{align*}
&P\left(\sup_{-K\leq x\leq 0}\frac{L_{X_\alpha^{\kappa_\alpha}}(\tau_{X_\alpha^{\kappa_\alpha}}(m),\mx)}{e^{\alpha h(\alpha)-\alpha W_m(\alpha^{-2}x)}}\leq \delta \right)\\
&\geq P\left(\alpha^{-2}Ke^{\alpha \Wb_m(-\alpha^{-2}K,0)
-\alpha h(\alpha)}\sup_{y\leq1}L_B(\tB(1),y)\leq \delta\right).
\end{align*}
As
$\limab\Wb_m(-\alpha^{-2}K,0)=0$, $\limab h(\alpha)=1$ and $\underset{y\leq1}{\sup}L_B(\tB(1),y)<\infty$ $P$-a.s., the right hand side of the last inequality tends to $1$ as $\alpha$ goes to infinity and then $(\ref{eqLtm})$ is proved together with the lemma.
\end{proof}
We move to the proof of the second lemma
\begin{lemme}\label{eqLtl}
Let $W\in\mathcal{W}$. For any $\delta>0$,
\begin{displaymath}
\lima P\left(\sup_{-K\leq x\leq K}\left|\frac{L_{X_\alpha^m}(\sam,m+\ax)}{e^{\alpha h(\alpha)-\alpha W_m(\ax)}}-1\right|\leq \delta \right)=1.
\end{displaymath}
\end{lemme}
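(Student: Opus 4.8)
The plan is to follow the same route as in Lemma~\ref{eqLtm1}: translate the statement about $L_{X_\alpha^m}$ into a statement about the Brownian local time via the representation~(\ref{eqxax})--(\ref{eqLxa}), then use the classic scaling of $L_B$ and a first Ray--Knight type description of $L_B(\sigma_B(1,0),\cdot)$ to identify the dominant term. Concretely, write $\mx:=m+\alpha^{-2}x$ as before. Using (\ref{eqxax}) with starting point $m$ together with (\ref{invloctim}), the stopping time $\sam$ corresponds under the map $S^m_\alpha$ to the inverse Brownian local time at $0$: one has $\sam = T^m_\alpha(\sigma_B(\rho_\alpha,0))$ for an appropriate level $\rho_\alpha$, and by (\ref{eqLxa}),
\begin{displaymath}
L_{X_\alpha^m}(\sam,\mx)=e^{-\alpha W_m(\alpha^{-2}x)}\,L_B\bigl(\sigma_B(\rho_\alpha,0),S^m_\alpha(\alpha^{-2}x)\bigr).
\end{displaymath}
The level $\rho_\alpha$ is fixed by the requirement that the local time of $X_\alpha^m$ at $m$ equals $e^{\alpha h(\alpha)}$, i.e. $\rho_\alpha = e^{\alpha h(\alpha)}/S^m_\alpha$ evaluated suitably --- more precisely $L_{X_\alpha^m}(\sam,m)=e^{-\alpha W_m(0)}L_B(\sigma_B(\rho_\alpha,0),0)=\rho_\alpha=e^{\alpha h(\alpha)}$, since $W_m(0)=0$ and $L_B(\sigma_B(r,0),0)=r$ by definition of $\sigma_B$. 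So $\rho_\alpha=e^{\alpha h(\alpha)}$.

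Next I would apply the Brownian scaling~(\ref{eqsca1}) (in its $\sigma_B$ form: $\lambda L_B(\sigma_B(r,0),y)\eglw L_B(\sigma_B(\lambda r,0),\lambda y)$) to pull out the factor $e^{\alpha h(\alpha)}$ and reduce to $L_B(\sigma_B(1,0),\cdot)$ evaluated at the rescaled points $s_\alpha(\alpha^{-2}x):=S^m_\alpha(\alpha^{-2}x)/e^{\alpha h(\alpha)}$. This yields that the process in the lemma is equal in law to
\begin{displaymath}
\left(e^{\alpha h(\alpha)-\alpha W_m(\alpha^{-2}x)}\,L_B\bigl(\sigma_B(1,0),s_\alpha(\alpha^{-2}x)\bigr)\right)_{-K\le x\le K},
\end{displaymath}
so the claim becomes: $\sup_{-K\le x\le K}\bigl|L_B(\sigma_B(1,0),s_\alpha(\alpha^{-2}x))-1\bigr|\to 0$ in $P$-probability. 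Now $L_B(\sigma_B(1,0),\cdot)$ is a.s. continuous with $L_B(\sigma_B(1,0),0)=1$, so it suffices to show $\sup_{-K\le x\le K}|s_\alpha(\alpha^{-2}x)|\to0$, equivalently $e^{-\alpha h(\alpha)}\int_0^{\alpha^{-2}x}e^{\alpha W_m(y)}\ud y\to0$ uniformly on $[-K,K]$ (for negative $x$ the integral is read as $-\int_{\alpha^{-2}x}^0$, and both signs are handled symmetrically). Bounding $|s_\alpha(\alpha^{-2}x)|\le \alpha^{-2}K\,\exp\bigl(\alpha\Wb_m(-\alpha^{-2}K,\alpha^{-2}K)-\alpha h(\alpha)\bigr)$ and using $\limab\Wb_m(-\alpha^{-2}K,\alpha^{-2}K)=0$ (continuity of $W$ at $m$) together with $\limab h(\alpha)=1$, this supremum goes to $0$; combined with a.s. continuity of $y\mapsto L_B(\sigma_B(1,0),y)$ at $0$ and the fact that $L_B(\sigma_B(1,0),0)=1$, one gets the required convergence in probability, proving the lemma.

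The main obstacle is the first step: making rigorous the identification $\sam=T^m_\alpha(\sigma_B(e^{\alpha h(\alpha)},0))$, i.e. checking that the inverse local time of $X_\alpha^m$ at $m$ maps, under the change of time/scale $(S^m_\alpha,T^m_\alpha)$, exactly onto the inverse local time of $B$ at $S^m_\alpha(0)=0$, with the correct level. This requires care with how the occupation-time/change-of-variables formula (\ref{tsc})--(\ref{eqLxa}) interacts with the random time change $T^m_\alpha$ (it is the monotonicity and continuity of $T^m_\alpha$, plus $L_{X_\alpha^m}(t,m)=e^{-\alpha W_m(0)}L_B(T^m_\alpha{}^{-1}(t),0)$, that makes the inverse local times correspond); one must also verify the scaling relation in the $\sigma_B$ form, which follows from~(\ref{eqsca1}) by inverting $r\mapsto L_B(\sigma_B(r,0),0)=r$. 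Once the representation is in place, the rest is a soft continuity argument and is routine, exactly parallel to the end of the proof of Lemma~\ref{eqLtm1}.
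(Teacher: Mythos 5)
Your proposal is correct and follows essentially the same route as the paper: reduce via the representation $\sigma_{X_\alpha}(r,m)=T^m_\alpha(\sigma_B(r,0))$ and the identity (\ref{eqLxa}) to an equality in law with $e^{\alpha h(\alpha)-\alpha W_m(\ax)}L_B(\sigma_B(1),\widetilde{s}_\alpha(\ax))$, then bound $|\widetilde{s}_\alpha|\leq K_\alpha\to0$ and invoke continuity of $L_B(\sigma_B(1),\cdot)$ at $0$. The only cosmetic difference is that the paper sets $m=0$ without loss of generality rather than carrying $m$ through, and the passing mention of a ``first Ray--Knight type description'' in your plan is never actually needed --- the argument rests solely on a.s.\ continuity of the local time at $0$.
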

\begin{proof}
For simplicity we denote for any process $M$, $\sigma_M(r):=\sigma_M(r,0)$, we also  assume without loss of generality that $m=0$, Lemma \ref{eqLtl} can therefore be rewritten in the following way:
\begin{equation}
\lima P\left(\sup_{-K\leq x\leq K}\left|\frac{L_{X_\alpha}(\sa,\ax)}{e^{\alpha h(\alpha)-\alpha W(\ax)}}-1\right|\leq \delta \right)=1.\label{eqLtlap}
\end{equation}
Like for $\tau$ in Lemma \ref {eqLtm1}, we easily get that $\sigma_{X_\alpha}(t)=T_\alpha(\sigma_B(t))$. Thus formula $(\ref{eqLxa})$ together with the scale invariance for the local time of Brownian motion
\begin{equation}
\forall r>0, \forall\lambda>0, \left(\lambda L_B(\sigma_B(r),y)\right)_{y\in\R}\eglw\left(L_B(\sigma_B(\lambda r),\lambda y)\right)_{y\in\R}\label{eqsca2}
\end{equation}
yields
\begin{displaymath}
\left(L_{X_\alpha}(\sa,\ax)\right)_{x\in\R}\eglw\left(e^{\alpha h(\alpha)-\alpha W(\ax)}L_B(\sigma_B(1),\widetilde{s}_\alpha(\ax))\right)_{x\in\R},
\end{displaymath}
where $\widetilde{s}_\alpha(\ax):=S_\alpha(\ax)e^{-\alpha h(\alpha)}$.
Let $K_\alpha:=\aK e^{\alpha \overline{W}(-\aK,\aK)-\alpha h(\alpha)}$, we have for all $x\in[-K,K]$, $-K_\alpha\leq \widetilde{s}_\alpha(\ax)\leq K_\alpha$. By collecting what we did above we get 
for any $\delta>0$,
\begin{align*}
&P\left(\sup_{-K\leq x\leq K}\left|\frac{L_{X_\alpha}(\sa,\alpha^{-2}x)}{e^{\alpha h(\alpha)-\alpha W(\ax)}}-1\right|\leq \delta \right)\\
&=P\left(\sup_{-K\leq x\leq K}\left|L_B(\sigma_B(1),\widetilde{s}_\alpha(\ax))-1\right|\leq \delta \right)\\
&\geq P\left(\sup_{-K_\alpha\leq y\leq K_\alpha} |L_B(\sigma_B(1),y)-1|\leq\delta\right).
\end{align*}
Moreover $\limab K_\alpha=0$ and $y\rightarrow L_B(\sigma_B(1),y)$ is continuous at $0$, so (\ref{eqLtlap}) and the lemma are proved.
\end{proof}

\subsection{Asymptotic behavior of $\sigma_{X_\alpha}(e^{\alpha h(\alpha)},m)$ \label{2.2}}
Before stating the main proposition of this section, we need a preliminary result on the random environment which gives precisions on the standard $h$-valley $\Delta_h(W)=(p_h,m_h,q_h)$ defined Section \ref{stdval}. We denote
\begin{displaymath}
 W^\#(x,y):=\max_{[x\wedge y,x\vee y]}\left(W(z)-\Wu(x,z)\right)
\end{displaymath}
where $x\vee y= \max(x,y)$ and $x\wedge y=\min(x,y)$. Notice that $W^\#$ represents the largest barrier of potential the process has to cross to go from $x$ to $y$.
We call \emph{depth of the valley} $\Delta_h(W)$ the quantity
\begin{displaymath}
	D(\Delta_h(W)):=(W(p_h)-W(m_h))\wedge(W(q_h)-W(m_h))
\end{displaymath}
and \emph{inner directed ascent} the quantity
\begin{displaymath}
	A(\Delta_h(W)):=W^\#(p_h,m_h)\vee W^\#(q_h,m_h).
\end{displaymath}
Note that the above notions have already been introduced by Sinai \cite{Sinai}, Brox \cite{Brox}, and Tanaka \cite{Tanaka}. According to Brox, we have the following
\begin{lemme}
There exists a subset $\tW$ of $\mathcal{W}$ of $\mathcal{P}$-measure $1$ such that for any $W\in\tW$, the standard $1$-valley $\Delta_1(W):=(p_1,m_1,q_1)$ satisfies $A(\Delta_1(W))<1<D(\Delta_1(W))$.\end{lemme}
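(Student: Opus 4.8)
The plan is to establish the two inequalities $A(\Delta_1(W))<1$ and $D(\Delta_1(W))>1$ separately, each on a subset of $\mathcal{W}$ of full $\mathcal{P}$-measure, and then intersect. The key observation is that both quantities are defined by maxima/minima of the two-sided Brownian path $W$ restricted to a random but well-localized region, namely the standard $1$-valley. The strategy is to argue that the ``critical'' events---$D(\Delta_1(W))=1$ or $A(\Delta_1(W))=1$---are events on which the path $W$ achieves some prescribed level difference \emph{exactly}, and such events have probability zero for Brownian motion. More precisely, $D(\Delta_1(W))\geq 1$ always holds by the very definition of a $1$-minimum (both walls $W(p_1)$ and $W(q_1)$ exceed $W(m_1)$ by at least $1$), so one only needs to exclude the boundary case $D(\Delta_1(W))=1$, i.e. the case where one of the two walls exactly equals $W(m_1)+1$.

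First I would make precise that $p_1,m_1,q_1$ are measurable functionals of $W$ and that, with $\mathcal{P}$-probability one, $M_1(W)$ has no accumulation point and the extrema strictly alternate (this is asserted in the text, so I may assume it). On this full-measure set, $\Delta_1(W)$ is well-defined. Next I would handle the depth: let $N$ be a $1$-minimum of $W$; being a $1$-minimum means there exist $\xi<N<\zeta$ with $W(\xi)\geq W(N)+1$ and $W(\zeta)\geq W(N)+1$. If $D(\Delta_1(W))=1$ then $m_1$ is a $1$-minimum for which at least one of the infima $\inf\{W(\xi):\xi<m_1, W \geq W(m_1) \text{ on } [\xi,m_1]\}$ equals exactly $W(m_1)+1$; translating this into a statement about the path, one shows it forces an event of the form ``$W$ hits the level $W(m_1)+1$ and, locally on one side, does not overshoot it,'' which by the strong Markov property and the fact that Brownian motion started at a point immediately enters both half-lines (so it cannot stay below a level it just touched) has probability zero. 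A clean way to implement this: enumerate candidate $1$-minima via a countable family (e.g. rational-indexed excursions of $W$ above running minima) and apply a union bound, using for each candidate that the relevant wall height has a continuous distribution with no atom at $1$.

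For the inner directed ascent, the argument is similar in spirit: $A(\Delta_1(W))=W^\#(p_1,m_1)\vee W^\#(q_1,m_1)$ is a max over $[p_1,q_1]$ of $W(z)-\Wu(p_1,z)$ (and the symmetric quantity), and one wants $A(\Delta_1(W))<1$ strictly. Here I would use that $m_1$ is a $1$-minimum and $p_1,q_1$ are the \emph{adjacent} $1$-maxima; the definition of the standard valley and the alternation property imply $W^\#(p_1,m_1)\le 1$ and $W^\#(q_1,m_1)\le 1$ automatically (crossing from $p_1$ down to $m_1$ one never needs to climb a barrier of size $\geq 1$, else there would be an intermediate $1$-extremum). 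Then equality $A=1$ is again a path event where some internal climb equals exactly $1$, excluded by the same no-atom/union-bound reasoning over a countable family of candidate configurations. Finally I would intersect the three full-measure sets (the alternation set, $\{D>1\}$, and $\{A<1\}$) to obtain $\tW$.

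The main obstacle I anticipate is the bookkeeping needed to reduce the random, path-dependent choice of $(p_1,m_1,q_1)$ to a \emph{countable} family of deterministic candidate events, so that a union bound applies cleanly; this is where one must be careful that ``being the standard $1$-valley'' is genuinely a measurable event and that the exceptional set of paths is covered by countably many null events. Once that reduction is in place, the probabilistic input is just the elementary fact that the supremum or infimum of Brownian motion over an interval, or the height of a Brownian wall, has no atom---together with the strong Markov property to rule out the path touching a level without crossing it. I expect the author's proof to simply cite Brox \cite{Brox} for the existence of $\tW$, since $A(\Delta_1)<1<D(\Delta_1)$ is exactly the kind of non-degeneracy of the standard valley that is standard in this literature.
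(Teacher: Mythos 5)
The paper gives no proof of this lemma: the text reads ``According to Brox, we have the following'' and imports the statement from \cite{Brox} (resting ultimately on the Neveu--Pitman analysis of $h$-extrema of Brownian motion), exactly as you predict at the end of your proposal. Your reconstruction of the omitted argument is correct in outline: for every path on which $M_1(W)$ is discrete and alternating one has the deterministic inequalities $D(\Delta_1)\ge 1$ and $A(\Delta_1)\le 1$, and the real content is that the boundary events $\{D=1\}$ and $\{A=1\}$ are $\mathcal{P}$-null, which follows from a countable reduction plus the absence of atoms in the law of the relevant wall heights. One phrasing to tighten: $D\ge 1$ is not quite ``by the very definition of a $1$-minimum'' --- that definition furnishes walls of height $\ge 1$ at some $\xi<m_1<\zeta$, whereas $p_1,q_1$ are the \emph{adjacent $1$-maxima}, and one needs a short extra step (the Neveu--Pitman fact that $W(p_1)=\max_{[p_1,m_1]}W$ and $W(m_1)=\min_{[p_1,m_1]}W$ for consecutive $1$-extrema, obtained by combining the $1$-min and $1$-max definitions with alternation) to conclude $W(p_1)-W(m_1)\ge 1$; since you already invoke exactly this kind of reasoning to justify $A\le 1$, this is a minor imprecision rather than a gap.
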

Throughout this section we write $p,m,q,D$ and $A$ for respectively $m_1(W)$, $p_1(W)$, $q_1(W)$, $D(\Delta_1(W))$ and $A(\Delta_1(W))$.
  \begin{figure}[ht]
\begin{center}
\input{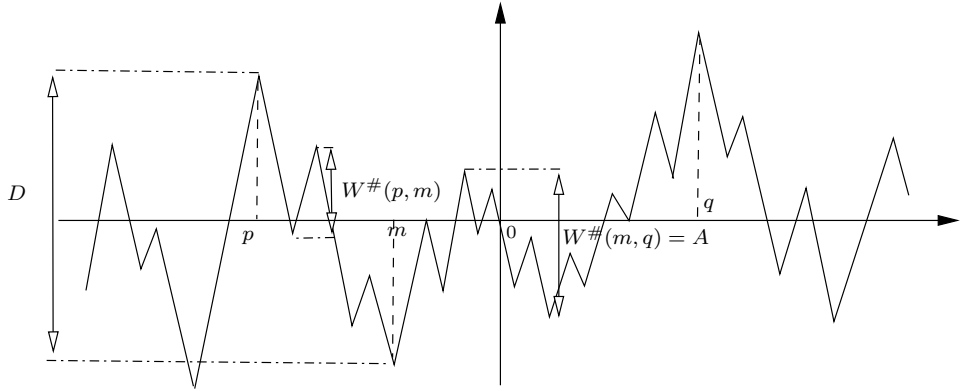} 
\caption{Example of 1-standard valley with its depth and its inner directed ascent.} \label{fig2}
\end{center}
\end{figure}

We can now state the main result of this section:
\begin{proposition}\label{limT}Let $W\in\tW$, $r\in(0,1)$ and $h$ be a function such that $\limab h(\alpha)=1$, then for all $\delta>0$,
\begin{displaymath}
\lima P\left(\left|\frac{\tlam}{e^{\alpha h(\alpha)}\int_{a_{r}}^{b_{r}}e^{-\alpha W_m(x)}\ud x}-1\right|\leq\delta \right)=1
\end{displaymath}
where $a_{r}$ and $b_{r}$ are defined in Theorem \ref{limL}.
\end{proposition}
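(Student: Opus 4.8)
The plan is to express the inverse local time $\tlam = \sigma_{X_\alpha}(e^{\alpha h(\alpha)},m)$ in terms of Brownian functionals via the representation $(\ref{eqxa})$ and $(\ref{eqLxa})$, and then to reduce the problem to the Brownian level. As in the proof of Lemma \ref{eqLtl}, using $m=0$ without loss of generality and the identity $\sigma_{X_\alpha}(t) = T_\alpha(\sigma_B(t))$, one writes
\begin{displaymath}
\tlam = T_\alpha(\sigma_B(e^{\alpha h(\alpha)})) = \int_\R e^{-2\alpha W(S_\alpha^{-1}(y))} L_B(\sigma_B(e^{\alpha h(\alpha)}),y)\,\ud y = \int_\R e^{-\alpha W(z)} L_B(\sigma_B(e^{\alpha h(\alpha)}),S_\alpha(z))\,\ud z,
\end{displaymath}
after the change of variables $y=S_\alpha(z)$, $\ud y = e^{\alpha W(z)}\ud z$. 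Applying the Brownian scaling $(\ref{eqsca2})$ with $\lambda = e^{-\alpha h(\alpha)}$, the right-hand side has the same law (under fixed $W$) as $e^{\alpha h(\alpha)}\int_\R e^{-\alpha W(z)} L_B(\sigma_B(1), \sat(z))\,\ud z$ where $\sat(z) = S_\alpha(z)e^{-\alpha h(\alpha)}$, exactly the rescaled space variable already used in Lemma \ref{eqLtl}. So it suffices to show that
\begin{displaymath}
\frac{\int_\R e^{-\alpha W(z)} L_B(\sigma_B(1),\sat(z))\,\ud z}{\int_{a_r}^{b_r} e^{-\alpha W(z)}\,\ud z} \longrightarrow 1
\end{displaymath}
in $P$-probability as $\alpha\to\infty$.

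The heart of the argument is a Laplace-type concentration estimate: because $W\in\tW$ has $A = A(\Delta_1(W)) < 1 < D = D(\Delta_1(W))$ and $r\in(0,1)$, the measure $e^{-\alpha W(z)}\ud z$ concentrates, as $\alpha\to\infty$, on an arbitrarily small neighborhood of $m=0$ (the bottom of the standard valley), and in particular the mass outside $[a_r,b_r]$ is negligible relative to the mass inside. I would split $\R$ into the interval $I_r := [a_r,b_r]$ and its complement. On $I_r$ one needs $L_B(\sigma_B(1),\sat(z)) \to 1$ uniformly; this follows because $\sat(z) = e^{-\alpha h(\alpha)}\int_0^z e^{\alpha W(u)}\ud u$ and for $z\in I_r$ with $r<1$ one has $\sup_{[0\wedge z, 0\vee z]} W \le r < 1$, so $|\sat(z)| \le |I_r| e^{\alpha(r - h(\alpha))} \to 0$ uniformly in $z\in I_r$, while $y\mapsto L_B(\sigma_B(1),y)$ is $P$-a.s. continuous at $0$ with value $L_B(\sigma_B(1),0)=1$. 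Hence $\int_{I_r} e^{-\alpha W(z)}L_B(\sigma_B(1),\sat(z))\,\ud z = (1+o(1))\int_{I_r} e^{-\alpha W(z)}\,\ud z$ in $P$-probability. For the complement, I would bound $L_B(\sigma_B(1),\sat(z)) \le \sup_{y\in\R} L_B(\sigma_B(1),y) =: \mathcal{L}$, which is $P$-a.s. finite, and control $\int_{\R\setminus I_r} e^{-\alpha W(z)}\,\ud z$ deterministically: splitting this region further according to whether $z$ lies between $a_r$ and $p$ (resp. $b_r$ and $q$), inside the valley but beyond $I_r$, or outside $[p,q]$ entirely, the defining properties of $h$-extrema, the inequality $A<1$, and $D>1$ give that $\int_{\R\setminus I_r}e^{-\alpha W(z)}\,\ud z$ is exponentially small compared with $\int_{I_r}e^{-\alpha W(z)}\,\ud z$ (one has a crude lower bound $\int_{I_r}e^{-\alpha W}\ge e^{-\alpha\varepsilon}\cdot(\text{const})$ for any $\varepsilon>0$ since $W(0)=0$ and $W$ is continuous, while on the relevant pieces of the complement the exponent is bounded by $-\alpha r'$ for some $r' > 0$, or the region has finite $e^{-\alpha W}$-mass decaying like $e^{-\alpha}$ times the integral of $e^{-\alpha(W-1)}$ which stays bounded thanks to $D>1$ and $A<1$ allowing a Brownian/Bessel-type comparison; for $|z|$ large one uses that $W(z)\to+\infty$ faster than linearly so the tails are integrable and $\alpha$-exponentially small).

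Combining the two pieces yields $\tlam \eglw e^{\alpha h(\alpha)} \int_{I_r} e^{-\alpha W(z)}\,\ud z \,(1+o_P(1))$, which is the claim since the law is under fixed $W$ and the ratio is deterministic up to the Brownian factor that we have shown concentrates at $1$. The main obstacle is the quantitative control of the $e^{-\alpha W}$-mass outside $[a_r,b_r]$: one must show not merely that it is small but that it is small \emph{relative} to the mass inside, uniformly enough to survive division, and this is where the hypotheses $A(\Delta_1(W))<1<D(\Delta_1(W))$ are essential — they guarantee that crossing out of the standard valley (or reaching the level-$r$ exit points $a_r,b_r$ with $r<1$) costs a strictly positive exponential rate, while the interior contribution is at worst sub-exponential. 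A secondary technical point is that the convergence $L_B(\sigma_B(1),\sat(z))\to 1$ must be made uniform over the random, $\alpha$-dependent interval $\sat(I_r)$, which shrinks to $\{0\}$; this is handled by the $P$-a.s. modulus of continuity of $y\mapsto L_B(\sigma_B(1),y)$ near $0$, exactly as at the end of Lemma \ref{eqLtl}.
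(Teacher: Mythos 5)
Your proof has two genuine gaps.

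First, the reduction to $m=0$ is not a valid "without loss of generality" here. The diffusion $X_\alpha$ starts at $0$, so $\tlam=\sigma_{X_\alpha}(e^{\alpha h(\alpha)},m)$ contains the time $\ta(m)$ needed to reach $m$ in the first place, and the representation $\tlam=T_\alpha(\sigma_B(e^{\alpha h(\alpha)}))$ you write down is only correct when $m=0$. When $m\neq 0$ the paper splits $\tlam=\ta(m)+(\tlam-\ta(m))$; the second piece reduces to the $m=0$ computation by the strong Markov property, but the first piece $\ta(m)$ requires its own estimate (Lemma \ref{eqtm}), which is non-trivial: it uses the first Ray--Knight theorem to represent the local time profile before hitting $m$ as a squared Bessel process, then Jensen, Fubini and a second-moment Chebyshev bound to show $\ta(m)/\left(\gW e^{\alpha h(\alpha)}\right)\to 0$. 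Your proposal silently drops this entire contribution.

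Second, your control of the integral outside $I_r=[a_r,b_r]$ is wrong where you invoke tail behaviour of $W$: you bound $L_B(\sigma_B(1),\sat(z))$ by $\mathcal{L}=\sup_y L_B(\sigma_B(1),y)$ and then appeal to "$W(z)\to+\infty$" to make $\int_{\R\setminus I_r}e^{-\alpha W(z)}\,\ud z$ small. But $W$ is a two-sided Brownian motion, so $\liminf_{|z|\to\infty}W(z)=-\infty$ $\mathcal{P}$-a.s. and $\int_{\R}e^{-\alpha W(z)}\,\ud z=+\infty$ a.s.; the bound you propose is infinite and gives nothing. The correct argument, as in Lemma \ref{eqtl} of the paper, is to use that $y\mapsto L_B(\sigma_B(1),y)$ has compact support: one defines $L=\inf\{x\leq 0/L_B(\sB(1),x)>0\}$ and $U=\sup\{x\geq 0/L_B(\sB(1),x)>0\}$, shows (via Laplace's method and $\limab h(\alpha)=1<D\leq W(p)\wedge W(q)$) that $P$-a.s.\ eventually $p<\widetilde{s}_\alpha^{-1}(L)<0<\widetilde{s}_\alpha^{-1}(U)<q$, and thereby confines the integral to the compact interval $[p,q]$. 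Only then can one split $[p,q]=[p,a_r]\cup[a_r,b_r]\cup[b_r,q]$ and use $\min_{[p,a_r]\cup[b_r,q]}W>0$ (which holds because $W\in\tW$) to show the outer contribution is $e^{-c\alpha}\cdot\mathcal{L}$ while the inner one is $(1+o(1))\gW$. Your sketch does gesture at the $A<1<D$ hypotheses, but the mechanism you describe for the tails (growth of $W$) is not how the estimate actually works and would fail. The lower bound and the uniform convergence $L_B(\sigma_B(1),\sat(z))\to 1$ on $I_r$ are fine and match the paper.
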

To lighten notations, in the rest of the paper we use the notation	
\begin{displaymath}
\gW:=\int_{a_{r}}^{b_{r}}e^{-\alpha W_m(y)}\ud y,\ \alpha>0.
\end{displaymath}
\begin{proof}
Assume that $m>0$, we get the other case by reflection, note that we work with a fixed $W$ which belongs to $\tW$. We follow the same steps of the proof of Proposition \ref{limLT}: $\tlam$ is decomposed into two terms,
\begin{displaymath}
\tlam=\ta(m)+\left(\tlam-\ta(m)\right).
\end{displaymath}
The first one $\ta(m)$ is treated in Lemma \ref{eqtm}, we show that its contribution is negligible comparing to $g(\alpha)e^{\alpha(\alpha)}$. Then thanks to the strong Markov property, it is enough to prove that $\tlam/g(\alpha)e^{\alpha h(\alpha)}$ converges to $1$ in $P$ probability, this is what Lemma \ref{eqtl} says.
\end{proof}

Let us state and prove the first lemma
\begin{lemme}\label{eqtm}
Let $W\in\tW$. For any $\delta>0$,
\begin{displaymath}
\lima P\left(\frac{\ta(m)}{\gW e^{\alpha h(\alpha)}}\leq \delta \right)=1.
\end{displaymath}
\end{lemme}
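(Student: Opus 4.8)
The plan is to bound the hitting time $\ta(m)$ from above by a quantity that is comparable to $e^{\alpha A'}$ for some $A'<1$, which will then be negligible compared to $\gW e^{\alpha h(\alpha)}$ since $\gW\gtrsim e^{\alpha(1-r)}$ up to polynomial factors and $h(\alpha)\to1$, while $A<1$. First I would recall that $\ta(m)=T_\alpha(\tB(S_\alpha(m)))$ from $(\ref{eqxa})$, so that $\ta(m)=\int_0^{\tB(S_\alpha(m))}e^{-2\alpha W(S_\alpha^{-1}(B(s)))}\ud s$. Applying the occupation time formula for the Brownian motion $B$, this becomes
\begin{displaymath}
\ta(m)=\int_{\R}e^{-2\alpha W(S_\alpha^{-1}(z))}L_B(\tB(S_\alpha(m)),z)\,\ud z,
\end{displaymath}
and after the change of variable $z=S_\alpha(y)$ (so $\ud z=e^{\alpha W(y)}\ud y$) we obtain the standard representation
\begin{displaymath}
\ta(m)=\int_{\R}e^{-\alpha W(y)}L_B(\tB(S_\alpha(m)),S_\alpha(y))\,\ud y.
\end{displaymath}

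Next, since the process starts at $0$ and must reach $m>0$, only the region to the left of $m$ and, up to the scale of the valley, to the right of $p$ contributes in an essential way; more precisely, the Brownian local time $L_B(\tB(S_\alpha(m)),z)$ vanishes for $z>S_\alpha(m)$ and is controlled for $z$ below that. I would bound $L_B(\tB(S_\alpha(m)),S_\alpha(y))\le \sup_{z\le S_\alpha(m)}L_B(\tB(S_\alpha(m)),z)=:\mathcal{L}_\alpha$, which by the scaling $(\ref{eqsca1})$ equals in law $S_\alpha(m)\sup_{z\le1}L_B(\tB(1),z)$, a finite random variable times $S_\alpha(m)$. Then
\begin{displaymath}
\ta(m)\le \mathcal{L}_\alpha\int_{-\infty}^{m}e^{-\alpha W(y)}\ud y,
\end{displaymath}
and the integral $\int_{-\infty}^{m}e^{-\alpha W(y)}\ud y$ is finite $\mathcal P$-a.s. and, because $m=m_1$ is the bottom of the standard valley while $p=p_1$ is a $1$-maximum to its left, behaves like $e^{\alpha(W(p)-W(m))(1+o(1))}$ — but for the bound I only need $\int_{-\infty}^{m}e^{-\alpha W(y)}\ud y\le C_W\,e^{\alpha\Wb(\,\cdot\,)}$ up to polynomial factors, and crucially I can write $S_\alpha(m)=\int_0^m e^{\alpha W(y)}\ud y \le m\,e^{\alpha\Wb(0,m)}$ with $\Wb(0,m)<1$ since no $1$-maximum separates $0$ from $m$ inside the valley.

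Assembling: $\ta(m)\le m\,e^{\alpha\Wb(0,m)}\,C_W\,e^{\alpha\Wb^{-}(m)}\,\sup_{z\le1}L_B(\tB(1),z)$ where $\Wb^{-}(m):=\sup\{-W(y):y\le m\}$ equals $W(p)-W(m)<D$, and more importantly $\Wb(0,m)+\Wb^{-}(m)$ can be taken strictly below $1+(1-r)$... — here I must be careful, so the real comparison is against $\gW e^{\alpha h(\alpha)}$. Since $W_m(a_r)=W_m(b_r)=r<1$ and $W_m\ge0$ on $[a_r,b_r]$, we have $\gW=\int_{a_r}^{b_r}e^{-\alpha W_m(y)}\ud y\ge b_r-a_r>0$, so $\gW e^{\alpha h(\alpha)}\ge (b_r-a_r)e^{\alpha h(\alpha)}$, i.e. it grows at least like $e^{\alpha(1-\varepsilon)}$ for any fixed $\varepsilon>0$ and large $\alpha$. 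On the other hand the upper bound for $\ta(m)$ grows at most like (polynomial)$\cdot e^{\alpha(\Wb(0,m)+W(p)-W(m))}\cdot(\text{a.s. finite r.v.})$. The key point, which is where the hypothesis $W\in\tW$ enters, is that $\Wb(0,m)+ (W(p)-W(m)) < 1$: indeed $0$ and $m$ lie in the same valley so $\Wb(0,m)=\Wb(0,m)-W(m)+W(m)$, and combining $\Wb(0,m)-W(m)\le \max(W^\#(p,m),\dots)\le$ something below $1$ together with $W(p)-W(m)<1$ forces... — I expect that the clean statement is $\Wb(0,m)+(W(p)-W(m))-h(\alpha)<0$ eventually, using $A<1\le h(\alpha)$ and the fact that near $m$ the potential $W$ stays close to $W(m)$ so $\Wb(0,m)$ is essentially $W(m)$ plus a term bounded by $A$. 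Once the deterministic exponent is shown to be negative, $\ta(m)/(\gW e^{\alpha h(\alpha)})$ is bounded above by (negative exponential in $\alpha$)$\times$(an a.s.-finite random variable), hence tends to $0$ in $P$-probability, giving the lemma.

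The main obstacle is the bookkeeping of the deterministic exponents: one must verify, using only $W\in\tW$ (so $A<1<D$) and $h(\alpha)\to1$, that $\limsup_\alpha \frac1\alpha\log\ta(m) < \liminf_\alpha\frac1\alpha\log(\gW e^{\alpha h(\alpha)})$, i.e. that the largest barrier encountered on the way from $0$ to $m$ is strictly smaller than $1$. This is intuitively clear from the definition of the standard valley (no $1$-maximum lies strictly between $0$ and $m$, and the inner ascent is $A<1$), but making the chain of inequalities precise — in particular handling the point that $\Wb(0,m)$ must be measured relative to $W(m)$ and bounded by $\max(W^\#(p,m),W^\#(m,q))=A$ — requires care. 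Everything else (the occupation-time rewriting, the Brownian scaling, the a.s. finiteness of $\sup_{z\le1}L_B(\tB(1),z)$ and of $\int_{-\infty}^m e^{-\alpha W}$, and converting an a.s. bound into convergence in probability) is routine.
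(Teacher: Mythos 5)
Your plan has the right shape and the final diagnostic is exactly right (one must show the largest barrier between $0$ and $m$ is $<1$), but two steps in the middle do not work as written, and one of them cannot be repaired without a genuinely different idea. First, the bound $\ta(m)\le \mathcal{L}_\alpha\int_{-\infty}^{m}e^{-\alpha W(y)}\,\ud y$ is vacuous: since a two-sided Brownian motion satisfies $\liminf_{y\to-\infty}W(y)=-\infty$ a.s., the integral $\int_{-\infty}^{m}e^{-\alpha W(y)}\,\ud y$ is infinite $\mathcal{P}$-a.s.\ (and likewise your $\Wb^-(m)=\sup\{-W(y):y\le m\}=+\infty$ a.s.). Fixing this requires showing, as the paper does, that with probability tending to $1$ the local time $L_B\bigl(\tB(1),\hat{s}_\alpha(\cdot)\bigr)$ vanishes on $(-\infty,p)$, i.e.\ that $\hat{s}_\alpha(p)\to-\infty$; only then can the integral be truncated at $p$.

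Second, and more seriously, even after that truncation your strategy of replacing $L_B\bigl(\tB(1),\hat{s}_\alpha(z)\bigr)$ by the deterministic bound $\sup_{y\le1}L_B(\tB(1),y)$ uniformly over $z\in[p,m]$ is too coarse. It yields $\ta(m)\lesssim e^{\alpha(\Wb(0,m)-W(m))}$, and the exponent $\Wb(0,m)-W(m)=W(n)-W(m)$ (with $n=\operatorname*{argmax}_{[0,m]}W$) can exceed $1$ for a perfectly admissible standard valley with $A<1<D$: for instance $W(0)=0$, $W$ rises to $W(n)=0.5$, falls to $W(m)=-0.6$, giving $W(n)-W(m)=1.1>1$ while $A=W^\#(p,m)\vee W^\#(q,m)$ can still be $0.5$. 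Your heuristic that ``$\Wb(0,m)$ is essentially $W(m)$ plus a term bounded by $A$'' is therefore false, and so is the claimed inequality $\Wb(0,m)+(W(p)-W(m))<1$ (indeed $W(p)-W(m)\ge D>1$). The cure, which is precisely what the paper's proof supplies, is to split the integral at $n$: on $[p,n]$ the crude sup bound does give the exponent $W(n)-\Wu(p,n)\le W^\#(p,m)\le A<1$; on $[n,m]$, where $e^{-\alpha W(z)}$ is largest, one must instead use that $L_B(\tB(1),\hat{s}_\alpha(z))\to0$ as $z\to m$, which the paper controls quantitatively via the first Ray--Knight theorem and a second-moment bound on the resulting two-dimensional squared Bessel integral. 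Also note a sign slip: since $W_m\ge0$ on $[a_r,b_r]$ you have $\gW\le b_r-a_r$, not $\ge$; what is actually used is only the Laplace estimate $\gW=e^{o(\alpha)}$.
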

\begin{proof}
This proof has the same outline as the proof of the point $(i)$ of Lemma $3.1$ in \cite{Brox}, however because of some slight differences and for completeness we give some details. 

By definition of the local time together with (\ref{eqLxa}) and (\ref{eqsca1}), the hitting time of $m$ can be written
\begin{align}
	\ta(m)=&\int_{-\infty}^{m}L_{X_\alpha}(\ta(m),z)\ud z, \nonumber\\
=&\int_{-\infty}^{m}e^{-\alpha W(z)}L_B(\tB(S_\alpha(m)),S_\alpha(z))\ud z, \nonumber \\
\eglw& S_\alpha(m)\int_{-\infty}^{m}e^{-\alpha W(z)}L_B(\tB(1),\hat{s}_\alpha(z))\ud z \label{andromaque}
\end{align}
where $\hat{s}_\alpha(z):=S_\alpha(z)/S_\alpha(m)$. Let $n:=\underset{[0,m]}{\textrm{argmax }}W$, we denote 
\begin{align*}
 I_{\alpha,1}&:=S_\alpha(m)\int_{-\infty}^{n}e^{-\alpha W(z)}L_B(\tB(1),\hat{s}_\alpha(z))\ud z, \\
 I_{\alpha,2}&:=S_\alpha(m)\int_{n}^{m}e^{-\alpha W(z)}L_B(\tB(1),\hat{s}_\alpha(z))\ud z,
\end{align*}
and Formula $(\ref{andromaque})$ can be rewritten
\begin{equation}
\ta(m)\eglw I_{\alpha,1}+I_{\alpha,2}.\label{I1I2}
\end{equation}
The rest of the proof consists essentially in finding an upper bound for $I_{\alpha,1}$ and $I_{\alpha,2}$.

\emph{We begin with $I_{\alpha,1}$}: first we prove that, with a probability which tends to $1$ when $\alpha$ goes to infinity, the process $X_\alpha$ does not visit coordinates smaller than $p$, where $p$ is the left vertex of the standard $1$-valley defined page \pageref{defvalley}. Thanks to this, the lower bound in the integral of $I_{\alpha,1}$ will be $p$ and not $-\infty$, \label{defl} the upper bound for $I_{\alpha,1}$ follows immediately. 

 Let us define 
\begin{align}
	l:=\inf\left\{x\leq 0/L_B(\tB(1),x)>0\right\},
\end{align}
we claim that $P\text{-a.s.}$, for $\alpha$ large enough, 
\begin{equation}
\hat{s}_\alpha^{-1}(l)>p.\label{eqsp}
\end{equation}
Indeed 
\begin{displaymath}
	\hat{s}_\alpha^{-1}(l)\geq p\Longleftrightarrow l\geq \hat{s}_\alpha(p)=-\frac{\int_p^0e^{\alpha W(x)}\ud x}{\int_0^me^{\alpha W(x)}\ud x},
\end{displaymath}
moreover Laplace's method gives
\begin{align*}
	\lima\frac{1}{\alpha}\log\int_p^0e^{\alpha W(x)}\ud x&=\Wb(p,0)=W(p)\textrm{ and}\\
	\lima\frac{1}{\alpha}\log\int_0^me^{\alpha W(x)}\ud x&=\Wb(0,m)=W(n)
\end{align*}
so
\begin{displaymath}
	\hat{s}_\alpha(p)=-\exp{(\alpha(W(p)-W(n))+o(\alpha))}.
\end{displaymath}
 Finally, according to the definition of the standard valley, $W(p)>W(n)$, therefore
\begin{displaymath}
	\lima \hat{s}_\alpha(p)=-\infty
\end{displaymath}
and $(\ref{eqsp})$ is true. 

On the event $\left\{\hat{s}_\alpha^{-1}(l)>p\right\}$, we have
\begin{align*}
I_{\alpha,1}&=S_\alpha(m) \int_{p}^{n}e^{-\alpha W(z)}L_B(\tB(1),\hat{s}_\alpha(z))\ud z\\
&\leq S_\alpha(m) (n-p)e^{-\alpha\Wu(p,n)}\max_{x\leq1}L_B(\tB(1),x).
\end{align*}
Moreover,
\begin{align*}
 S_\alpha(m)\leq me^{\alpha \Wb(0,m)}\leq (q-p)e^{\alpha W(n)}
\end{align*}
thus we get the upper bound
\begin{align}
	I_{\alpha,1}\leq(q-p)^2e^{\alpha A}\max_{x\leq1}L_B(\tB(1),x)\label{majI1}
\end{align}
where $A$ is the inner direct ascent of the valley defined at the beginning of this section.

\emph{We continue with $I_{\alpha,2}$}, the main ingredient to get an upper bound in this case is to use the first Ray-Knight theorem which reduces the study of an integral involving a two-dimensional Bessel process: first we rewrite $I_{\alpha,2}$ in the following way
\begin{align}
I_{\alpha,2}=S_\alpha(m)\int_n^me^{-\alpha W(z)}L(\tB(1),1-\bar{s}_\alpha(z))\ud z \label{eq26}
\end{align}
where
$$\bar{s}_\alpha(z):=1-\hat{s}_\alpha(z)=\frac{1}{S_\alpha(m)}\int_z^me^{\alpha W(x)}\ud x.$$
Let $R$ be a two-dimensional Bessel squared process starting at the origin, according to the first Ray-Knight theorem 
$$\left(L_B(\tB(1),1-\bar{s}_\alpha(z))\right)_{z\in[0,m]}\eglw\left(R(\bar{s}_\alpha(z))\right)_{z\in[0,m]} \label{bobby},$$	
 together with the scale invariance $(t^2R(1/t),\ t\in\R_+)\eglw(R(t),\ t\in\R_+)$ we get
\begin{align}
\int_n^me^{-\alpha W(z)}R(\bar{s}_\alpha(z))\ud z\eglw\int_n^m\left\{e^{-\alpha W(z)}\bar{s}_\alpha(z)\right\}\bar{s}_\alpha(z)R(\frac{1}{\bar{s}_\alpha(z)})\ud z. \label{eq27}
\end{align}
We are now able to get a preliminary upper bound for $I_{\alpha,2}$:
\begin{align}
&S_\alpha(m)\int_n^m\left\{e^{-\alpha W(z)}\bar{s}_\alpha(z)\right\}\bar{s}_\alpha(z)R(\frac{1}{\bar{s}_\alpha(z)})\ud z,\nonumber\\
&\leq\max_{n\leq z\leq m}\left[e^{-\alpha W(z)}\int_z^me^{\alpha W(x)}\ud x\right]\int_n^m\bar{s}_\alpha(z)R(\frac{1}{\bar{s}_\alpha(z)})\ud z,\nonumber\\
&\leq(q-p)\exp\left\{\alpha\max_{n\leq z\leq m}\left[-W(z)+\Wb(z,m)\right]\right\}\int_n^m\bar{s}_\alpha(z)R(\frac{1}{\bar{s}_\alpha(z)})\ud z,\nonumber\\
&\leq(q-p)^2e^{\alpha A}\underbrace{\frac{1}{m-n}\int_n^m\bar{s}_\alpha(z)R(\frac{1}{\bar{s}_\alpha(z)})\ud z}_{J_\alpha}.\label{majI2b}
\end{align}
The last inequality comes from the relation $\max_{n\leq z\leq m}\left[-W(z)+\Wb(z,m)\right]=W^\#(n,m)\leq A$. 

According to Jensen's inequality and Fubini's theorem the expectation of $(J_\alpha)^2$ satisfies
\begin{align}
E\left[(J_\alpha)^2\right]&\leq\frac{1}{m-n}\int_n^mE[\bar{s}_\alpha^2(z)(R)^2(\frac{1}{\bar{s}_\alpha(z)})]\ud z,\nonumber\\
&=\frac{1}{m-n}\int_n^m\int_0^{+\infty}\frac{\bar{s}_\alpha(z)^3y^2}{2}e^{-\frac{\bar{s}_\alpha(z)y}{2}}\ud y\ud z=8.\label{eja}
\end{align}
\emph{End of the proof of the lemma}: using $\eqref{I1I2}$, we obtain for all $\alpha>0$
\begin{align*}
	&P\left(\frac{\ta(m)}{\gW e^{\alpha h(\alpha)}}\leq \delta \right)= P\left(\frac{I_{\alpha,1}+I_{\alpha,2}}{\gW e^{\alpha h(\alpha)}}\leq \delta \right)\\
	&\geq P\left(\frac{I_{\alpha,1}}{\gW e^{\alpha h(\alpha)}}\leq \frac{\delta}{2}\textrm{ ; } \hat{s}_\alpha^{-1}(l)\geq p\right)+P\left(\frac{I_{\alpha,2}}{\gW e^{\alpha h(\alpha)}}\leq \frac{\delta}{2}\right)-1.
\end{align*}
For the first term in the above expression, $(\ref{majI1})$ yields
\begin{align}
&P\left(\frac{I_{\alpha,1}}{\gW e^{\alpha h(\alpha)}}\leq \frac{\delta}{2}\textrm{ ; }\hat{s}_\alpha^{-1}(l)\geq p\right)\geq\nonumber\\
&P\left(\max_{x\leq1}L_B(\tB(1),x)\leq G(\alpha)\textrm{ ; }\hat{s}_\alpha^{-1}(l)\geq p\right)\label{pasdidee}
\end{align}
where
\begin{displaymath}
 G(\alpha):=\frac{\delta g(\alpha)}{2(q-p)^2}e^{\alpha (h(\alpha)-A)}.
\end{displaymath}
By Laplace's method we know that $\limab\frac{\log g(\alpha)}{\alpha}=0$, so $g(\alpha)=e^{\circ(\alpha)}$, therefore as $\limab h(\alpha)=1>A$, $G(\alpha)$ tends to infinity when $\alpha$ does. Using that $y\rightarrow L_B(\tB(1),y)$ is $P$-a.s. finite and (\ref{eqsp}) we get that $(\ref{pasdidee})$ tends to $1$ when $\alpha$ goes to infinity.

For the second term, we collect (\ref{eq26}), (\ref{eq27}) and (\ref{majI2b}), we get
\begin{displaymath}
P\left(\frac{I_{\alpha,2}}{\gW e^{\alpha h(\alpha)}}>\frac{\delta}{2}\right)\leq P\left(J_\alpha> G(\alpha)\right),
\end{displaymath}
then by Tchebytchev's inequality and $(\ref{eja})$
\begin{align*}
P\left(J_\alpha> G(\alpha)\right)&\leq\frac{E\left[(J_\alpha)^2\right]}{\left(G(\alpha)\right)^2} 
 \leq \frac{8}{\left(G(\alpha)\right)^2}.
\end{align*}
Using once again that $G(\alpha)$ tends to infinity we get the lemma.
\end{proof}
 Next step is to prove 
\begin{lemme}\label{eqtl}
Let $W\in\tW$. For any $\delta>0$,
\begin{displaymath}
\lima P\left(\left|\frac{\sam}{\gW e^{\alpha h(\alpha)}}-1\right|\leq \delta \right)=1.
\end{displaymath}
\end{lemme}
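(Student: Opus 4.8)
The plan is to compute the time $\sam = \sigma_{X_\alpha^m}(e^{\alpha h(\alpha)},0)$ (recall $m=0$ is assumed) by the occupation-time formula, using the local-time asymptotics already established in Section \ref{2.1}. Indeed, $\sam = \int_\R L_{X_\alpha}(\sam,z)\,\ud z$, and we want to show this integral is asymptotically equivalent to $\gW e^{\alpha h(\alpha)} = e^{\alpha h(\alpha)}\int_{a_r}^{b_r}e^{-\alpha W(z)}\,\ud z$. The heuristic is clear: by Lemma \ref{eqLtl} (after shifting back, this is Proposition \ref{limLT} applied at $m=0$), on any compact set $[-K,K]$ scaled by $\alpha^{-2}$ we have $L_{X_\alpha}(\sam,\alpha^{-2}x)\approx e^{\alpha h(\alpha)-\alpha W(\alpha^{-2}x)}$; but the integral $\int e^{-\alpha W(z)}\ud z$ over the valley is dominated by a neighborhood of the bottom $m=0$ of size $O(\alpha^{-2})$ because $W$ near $0$ looks locally like a Brownian motion and $e^{-\alpha W(\alpha^{-2}x)}$ concentrates there. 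So the main mass of $\sam$ comes exactly from the region where the local-time approximation is valid.

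The key steps, in order: (i) Fix a large $K$ and split $\R = [-\alpha^{-2}K,\alpha^{-2}K] \cup ([a_r,b_r]\setminus[-\alpha^{-2}K,\alpha^{-2}K]) \cup ((-\infty,a_r)\cup(b_r,+\infty))$. (ii) On the central block, use Lemma \ref{eqLtl} to replace $L_{X_\alpha}(\sam,z)$ by $e^{\alpha h(\alpha)-\alpha W(z)}(1+o(1))$ uniformly, so that this contribution is $e^{\alpha h(\alpha)}\int_{-\alpha^{-2}K}^{\alpha^{-2}K}e^{-\alpha W(z)}\ud z\,(1+o(1))$. (iii) Show the tails of $\gW$ are negligible: $\int_{[a_r,b_r]\setminus[-\alpha^{-2}K,\alpha^{-2}K]}e^{-\alpha W(z)}\ud z = o\big(\int_{-\alpha^{-2}K}^{\alpha^{-2}K}e^{-\alpha W(z)}\ud z\big)$ as $\alpha\to\infty$ then $K\to\infty$ — this uses the scaling $W(\alpha^{-2}\cdot)\overset{\mathcal L}{=}\alpha^{-1}W(\cdot)$ (Lemma \ref{egx}-type self-similarity), under which $\alpha^2\int e^{-\alpha W(z)}\ud z \overset{\mathcal L}{=}\int e^{-W(u)}\ud u$ and the integrand $e^{-W(u)}$ over $|u|>K$ has vanishing expected mass by the transience-to-$+\infty$ of the local behavior; equivalently, $W$ drifts away so $\int_{|u|>K}e^{-W(u)}\ud u\to 0$ a.s. and in $L^1$. (iv) Control the two outer pieces: for $z\notin[a_r,b_r]$ the process $X_\alpha$ reaches such $z$ only by climbing a barrier of height $\geq \alpha r$, so $L_{X_\alpha}(\sam,z)$ is stochastically dominated by something of order $e^{\alpha h(\alpha)-\alpha r}$ times a Brownian local time supremum (argue as in Lemma \ref{eqLtm1} via \eqref{eqLxa} and scaling \eqref{eqsca2}), giving total contribution $o(\gW e^{\alpha h(\alpha)})$ since $\int e^{-\alpha W}$ over the valley is $e^{o(\alpha)}$ while $h(\alpha)\to 1 > r$; also on $[a_r,b_r]$ but outside the central block one bounds $L_{X_\alpha}(\sam,z)\leq e^{\alpha h(\alpha)-\alpha\Wu(0,z)}\sup L_B(\sigma_B(1),\cdot)$, whose integral is $\leq e^{\alpha h(\alpha)}\int_{[a_r,b_r]}e^{-\alpha W(z)}\ud z\cdot O_P(1)$ and hence, combined with (iii), is a lower-order correction after dividing by $\gW e^{\alpha h(\alpha)}$. (v) Assemble: $\sam/(\gW e^{\alpha h(\alpha)}) \to 1$ in $P$-probability.

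I expect the main obstacle to be step (iv), specifically getting a \emph{uniform-in-$z$} upper bound for $L_{X_\alpha}(\sam,z)$ over the full half-lines $z<a_r$ and $z>b_r$ that integrates to something negligible. One cannot directly invoke Lemma \ref{eqLtl}, which is only a statement on compact sets; instead one must use the representation \eqref{eqLxa}, $L_{X_\alpha}(\sam,z)=e^{-\alpha W(z)}L_B(\sigma_B(e^{\alpha h(\alpha)}),S_\alpha(z))$, together with scaling and the fact that $S_\alpha(z)$ for $z$ outside the valley is exponentially large (in $\alpha$) compared to $S_\alpha$ evaluated inside, so that $L_B(\sigma_B(1),\cdot)$ is being sampled far out where it is typically zero, or small. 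Making the "far out, hence small" estimate quantitative and integrable against $e^{-\alpha W(z)}\ud z$ on an unbounded domain — while keeping track of the random environment $W\in\tW$ with its controlled depth $D>1$ and ascent $A<1$ — is the delicate part; the barrier heights $D(\Delta_1)>1$ at $p$ and $q$ are what ultimately confine the diffusion and make the outer integrals exponentially small. The remaining steps are routine given Proposition \ref{limLT}/Lemma \ref{eqLtl}, Laplace's method (already invoked in Lemma \ref{eqtm} to get $g(\alpha)=e^{o(\alpha)}$), and the self-similarity of Brownian motion.
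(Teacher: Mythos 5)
There is a genuine gap, and it sits exactly where you placed your bets. First, your step (iii) is both unnecessary and unjustified. Lemma \ref{eqtl} is a quenched statement: $W\in\tW$ is \emph{fixed} and the probability is $P$ (over $B$) alone, so you cannot use the scaling identity for the law of $W$ to estimate how the mass of $\gW$ is distributed; and the fact you invoke to close the argument, namely that $\int_{|u|>K}e^{-W(u)}\ud u\to0$ a.s.\ and in $L^1$, is false for a two-sided Brownian motion (its liminf is $-\infty$, so that integral is a.s.\ infinite). Finiteness of $\int e^{-R}$ holds for the Bessel(3) process $R$, i.e.\ for the environment seen from the valley bottom \emph{after} the passage to the limit in law -- that is precisely the content of Lemma \ref{cvloiw}, and it is not available at the quenched level. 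The paper sidesteps the whole issue: from $\sa\eglw e^{\alpha h(\alpha)}\int_{-\infty}^{+\infty}e^{-\alpha W(x)}L_B(\sB(1),\widetilde{s}_\alpha(x))\ud x$ with $\widetilde{s}_\alpha=S_\alpha e^{-\alpha h(\alpha)}$, it treats the window $[a_r,b_r]$ as a single block: since $r<1=\lim h(\alpha)$, Laplace's method gives $\widetilde{s}_\alpha(a_r)\to0$ and $\widetilde{s}_\alpha(b_r)\to0$, and the continuity of $y\mapsto L_B(\sB(1),y)$ at $0$, where it equals $1$, forces both the sup and the inf of $L_B(\sB(1),\cdot)$ over $[\widetilde{s}_\alpha(a_r),\widetilde{s}_\alpha(b_r)]$ to tend to $1$; hence the $[a_r,b_r]$ contribution is equivalent to $\gW e^{\alpha h(\alpha)}$ directly, with no central block of width $\alpha^{-2}K$, no concentration claim on $\gW$, and no appeal to Lemma \ref{eqLtl}.

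Second, the obstacle you flag in (iv) is indeed the crux, and it remains unresolved in your plan: a pointwise bound of order $e^{\alpha h(\alpha)-\alpha r}$ does not integrate over the unbounded half-lines. The missing idea is that $L_B(\sB(1),\cdot)$ has a.s.\ compact support $[L,U]$, while $W\in\tW$ gives $W(p)\wedge W(q)\geq D>1=\lim h(\alpha)$, so Laplace's method yields $\widetilde{s}_\alpha(p)\to-\infty$ and $\widetilde{s}_\alpha(q)\to+\infty$; hence eventually $p<\widetilde{s}_\alpha^{-1}(L)<0<\widetilde{s}_\alpha^{-1}(U)<q$ and the integrand is \emph{identically zero} outside $[p,q]$ -- no quantitative ``far out hence small'' estimate is needed. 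Inside $[p,q]$ but outside $[a_r,b_r]$ the paper's bound is $(q-p)e^{-\alpha\min_{I_r}W}\sup_{y}L_B(\sB(1),y)$ with $I_r=[p,a_r]\cup[b_r,q]$; the gain is only that $\min_{I_r}W_m>0$ for $W\in\tW$ (a fixed constant depending on $W$, not a barrier of height $r$), which suffices because $\gW=e^{o(\alpha)}$. Without the compact-support confinement and with step (iii) resting on a false property of the Brownian environment, the proposal does not prove the lemma.
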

\begin{proof}
Just like for the proof of Lemma \ref{eqLtl} we assume without loss of generality that $m=0$, as a consequence $\sam=\sa$ and we simply have to establish that:
\begin{align}
\lima P\left(\left|\frac{\sa}{\gW e^{\alpha h(\alpha)}}-1\right|\leq \delta \right)=1.
\label{eqtlm}
\end{align}
In the same way we get $(\ref{andromaque})$, one can prove that
\begin{equation}
 \sa\eglw e^{\alpha h(\alpha)}\int_{-\infty}^{+\infty}e^{-\alpha W(x)}L_B(\sB(1),\widetilde{s}_\alpha(x))\ud x, \label{eglsa}
\end{equation}
recall that $\widetilde{s}_\alpha(y)=S_\alpha(y)e^{-\alpha h(\alpha)}$. The rest of the proof is devoted to estimate the integral and the main difficulty is to get the upper bound.

\emph{We begin with the lower bound}, we easily get that 
\begin{align*}
	\int_{-\infty}^{+\infty}e^{-\alpha W(x)}L_B(\sB(1),\widetilde{s}_\alpha(x))\ud x
	&\geq\int_{a_r}^{b_r}e^{-\alpha W(x)}L_B(\sB(1),\widetilde{s}_\alpha(x))\ud x,\\
	&\geq\inf_{y\in[\widetilde{s}_\alpha(a_{r}),\widetilde{s}_\alpha(b_{r})]}L_B(\sB(1),y)g(\alpha),
\end{align*}
where $a_r$ and $b_r$ are defined at the end of Theorem \ref{limL}, therefore
\begin{align}
&P\left(\frac{\sa}{\gW e^{\alpha h(\alpha)}}\geq1-\delta\right)\geq\nonumber\\ 
&P\left(\inf_{y\in[\widetilde{s}_\alpha(a_{r}),\widetilde{s}_\alpha(b_{r})]}L_B(\sB(1),y)\geq1-\delta\right).\label{bosup}
\end{align}
Also we recall that $r \in (0,1)$ therefore we can prove easily using Laplace's method that $\lim_{\alpha \rightarrow + \infty}\widetilde{s}_\alpha(a_{r})=\lim_{\alpha \rightarrow + \infty}\widetilde{s}_\alpha(b_{r})=0$. We conclude by noticing that $P-a.s.,\ \limab\inf_{y\in[\widetilde{s}_\alpha(a_{r}),\widetilde{s}_\alpha(b_{r})]}L_B(\sB(1),y)=1$, thanks to the continuity of the function $y\rightarrow L_B(\sB(1),y)$ at $0$.

 \emph{We continue with the upper bound}.  First we use the same idea as in the proof of Lemma \ref{eqtm}: we establish that with a probability which tends to $1$ as $\alpha$ goes to infinity, $X_\alpha$ does not exit from the standard valley $(p,m,q)$. Define
\begin{displaymath}
	L:=\inf\left\{x\leq 0/L_B(\sB(1),x)>0\right\}\text{ and }U:=\sup\left\{x\geq 0/L_B(\sB(1),x)>0\right\},
\end{displaymath}
we claim that, 
\begin{equation}
P\text{-a.s.}\ \exists \alpha_0,\ \forall\alpha>\alpha_0,\ p<\widetilde{s}_\alpha^{-1}(L)<0<\widetilde{s}_\alpha^{-1}(U)<q.\label{equinf}
\end{equation}
Indeed, we have 
\begin{displaymath}
	\widetilde{s}_\alpha^{-1}(L)\geq p\Longleftrightarrow L\geq \widetilde{s}_\alpha(p)=-e^{-\alpha h(\alpha)}\int_{p}^0e^{\alpha W(x)}\ud x,
\end{displaymath}
and by Laplace's method we get $\widetilde{s}_\alpha(p)=-e^{\alpha(W(p)-h(\alpha))+o(\alpha)}$.
It follows from the fact that $W \in \tW$ and $\limab h(\alpha)=1<D\leq W(p)$ that $\lima \widetilde{s}_\alpha(p)=-\infty,\ P\textrm{-a.s.}$. In a similar way we obtain	$\lima \widetilde{s}_\alpha(q)=+\infty,\ P\textrm{-a.s.}$ and $(\ref{equinf})$ is satisfied. 

On the event $\left\{p<\widetilde{s}_\alpha^{-1}(L)<0<\widetilde{s}_\alpha^{-1}(U)<q\right\}$, we can write
\begin{align*}
	\int_{-\infty}^{+\infty}e^{-\alpha W(x)}L_B(\sB(1),\widetilde{s}_\alpha(x))\ud x&=\int_{p}^{a_{r}}e^{-\alpha W(x)}L_B(\sB(1),\widetilde{s}_\alpha(x))\ud x\\
	&+\int_{a_{r}}^{b_{r}}e^{-\alpha W(x)}L_B(\sB(1),\widetilde{s}_\alpha(x))\ud x\\
	&+\int_{b_{r}}^{q}e^{-\alpha W(x)}L_B(\sB(1),\widetilde{s}_\alpha(x))\ud x.
\end{align*}
We only have to find an upper bound for these integrals. First, we have
\begin{align*}
	&\int_{p}^{a_{r}}e^{-\alpha W(x)}L_B(\sB(1),\widetilde{s}_\alpha(x))\ud x+\int_{b_{r}}^{q}e^{-\alpha W(x)}L_B(\sB(1),\widetilde{s}_\alpha(x))\ud x\\
	&\leq(q-p)\exp(-\alpha\min_{x\in I_r} W(x))\sup_{y\in\R}L_B(\sB(1),y)
\end{align*}
where $I_r:=[p,a_{r}]\cup[b_{r},q]$. Moreover
\begin{align*}
	\int_{a_{r}}^{b_{r}}e^{-\alpha W(x)}L_B(\sB(1),\widetilde{s}_\alpha(x))\ud x 
&\leq\sup_{y\in[\widetilde{s}_\alpha(a_{r}),\widetilde{s}_\alpha(b_{r})]}L_B(\sB(1),y)\int_{a_{r}}^{b_{r}}e^{-\alpha W(x)}\ud x\\
&=g(\alpha) \sup_{y\in[\widetilde{s}_\alpha(a_{r}),\widetilde{s}_\alpha(b_{r})]}L_B(\sB(1),y).\\
\end{align*}
Therefore, assembling the last two inequalities and the equality in law $(\ref{eglsa})$
\begin{align}
&P\left(\frac{\sa}{\gW e^{\alpha h(\alpha)}}-1\leq\delta\right)\geq\nonumber\\
&P\left(\sup_{y\in[\widetilde{s}_\alpha(a_{r}),\widetilde{s}_\alpha(b_{r})]}L_B(\sB(1),y)-1+\frac{(q-p)}{\gW}e^{-\alpha\min_{I_r}W}\sup_{y\in\R}L_B(\sB(1),y)\leq\delta\textrm{ ;}\right.\nonumber\\
&p<\widetilde{s}_\alpha^{-1}(L)<0<\widetilde{s}_\alpha^{-1}(U)<q\Bigg).\label{boinf} 
\end{align}
By hypothesis $r<1$, so $\limab \widetilde{s}_\alpha(a_{r})=\limab\widetilde{s}_\alpha(b_{r})=0$, moreover $y\rightarrow L_B(\sigma_B(1),y)$ is $P$-a.s. continuous at $0$, it follows
\begin{displaymath}
 \lima\sup_{y\in[\widetilde{s}_\alpha(a_{r}),\widetilde{s}_\alpha(b_{r})]}L_B(\sB(1),y)=1\quad P\text{-a.s.}.
\end{displaymath}
We also know that $\limab \frac{\log g(\alpha)}{\alpha}=0$, moreover according to the definition of  $\tW$, $\underset{x\in I_r}{\min}W(x)>0$, and finally $\sup_{y\in\R}L_B(\sB(1),y)$ is $P$-a.s. finite, thus
\begin{displaymath}
 \lima\frac{(q-p)}{\gW}e^{-\alpha\min_{I_r}W}\sup_{y\in\R}L_B(\sB(1),y)=0\quad P\textrm{-a.s.}.
\end{displaymath}
Putting the last two assertions together with (\ref{boinf}) we get the upper bound and finally the lemma.
\end{proof}

\section{Proof of the main results  \label{S3}}
One of the key result of this paper is Theorem \ref{limL}, the other results can be deduced from that theorem together with estimates on the random environment, so we naturally start with the 
\subsection{Proof of Theorem \ref{limL}  \label{S3.1}}

We begin with a proposition which resume Propositions \ref{limLT} and \ref{limT}, we get the asymptotic behavior of $L_{X_\alpha}$ within a deterministic interval of time:
\begin{proposition}\label{cle}
Let $K>0$, $r\in(0,1)$, $W\in\tW$ and $h$ a real function such that $\underset{\alpha\rightarrow\infty}{\lim}h(\alpha)=1$. For all $\delta>0$, we have
\begin{displaymath}
\lima P\left(\sup_{-K\leq x\leq K}\left|\frac{L_{X_\alpha}(e^{\alpha h(\alpha)},m+\ax)}{e^{\alpha h(\alpha)}}\frac{\int_{a_{r}}^{b_{r}}e^{-\alpha W_m(y)}\ud y}{e^{-\alpha W_m(\ax)}}-1\right|\leq \delta \right)=1.
\end{displaymath}
\end{proposition}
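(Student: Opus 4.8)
The plan is to compare the deterministic instant $e^{\alpha h(\alpha)}$ with the inverse local time clocks at $m$ already controlled by Proposition~\ref{limT}, transfer the description of $L_{X_\alpha}$ at those clocks provided by Proposition~\ref{limLT}, and conclude using only the monotonicity of $t\mapsto L_{X_\alpha}(t,y)$. Fix $\delta>0$ and choose $\eta\in(0,1/2)$ small enough that $(1+\eta)^2-1\le\delta$ and $1-(1-\eta)^2\le\delta$. Set
\[
r_1(\alpha):=(1-\eta)\,\frac{e^{\alpha h(\alpha)}}{\gW},\qquad r_2(\alpha):=(1+\eta)\,\frac{e^{\alpha h(\alpha)}}{\gW},
\]
and write $r_i(\alpha)=e^{\alpha h_i(\alpha)}$, so that $h_i(\alpha)=h(\alpha)-\tfrac1\alpha\log\gW+\tfrac1\alpha\log(1\mp\eta)$. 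Since $\tfrac1\alpha\log\gW\to0$ by Laplace's method (already used in the proof of Lemma~\ref{eqtm}) and $h(\alpha)\to1$, each $h_i$ satisfies $h_i(\alpha)\to1$; moreover $W\in\tW$, so $\gW,a_r,b_r$ are well defined. Hence both Proposition~\ref{limLT} and Proposition~\ref{limT} apply with $h$ replaced by $h_i$, $i=1,2$.

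First I would apply Proposition~\ref{limT} with $h_1$ and with $h_2$: it gives $\tla(r_i(\alpha),m)\big/\big((1\mp\eta)e^{\alpha h(\alpha)}\big)\to1$ in $P$-probability. Since $\eta<1/2$, this forces, on an event $A_\alpha$ with $P(A_\alpha)\to1$, the bracketing $\tla(r_1(\alpha),m)\le e^{\alpha h(\alpha)}\le\tla(r_2(\alpha),m)$. On $A_\alpha$, the monotonicity of $t\mapsto L_{X_\alpha}(t,y)$ then yields, for every $x\in[-K,K]$,
\[
L_{X_\alpha}\bigl(\tla(r_1(\alpha),m),m+\ax\bigr)\le L_{X_\alpha}\bigl(e^{\alpha h(\alpha)},m+\ax\bigr)\le L_{X_\alpha}\bigl(\tla(r_2(\alpha),m),m+\ax\bigr).
\]

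Next I would apply Proposition~\ref{limLT} with $h_1$ and with $h_2$: on an event $B_\alpha$ with $P(B_\alpha)\to1$, one has $\sup_{-K\le x\le K}\bigl|L_{X_\alpha}(\tla(r_i(\alpha),m),m+\ax)\big/e^{\alpha h_i(\alpha)-\alpha W_m(\ax)}-1\bigr|\le\eta$ for $i=1,2$, and since $e^{\alpha h_i(\alpha)}=(1\mp\eta)e^{\alpha h(\alpha)}/\gW$, this bounds $L_{X_\alpha}(\tla(r_i(\alpha),m),m+\ax)$ above and below by $(1\pm\eta)(1\mp\eta)\tfrac{e^{\alpha h(\alpha)}}{\gW}e^{-\alpha W_m(\ax)}$, uniformly on $[-K,K]$. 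Intersecting $A_\alpha\cap B_\alpha$ with the sandwich of the previous display, I get that with probability tending to $1$, for all $x\in[-K,K]$,
\[
(1-\eta)^2\,\frac{e^{\alpha h(\alpha)}}{\gW}\,e^{-\alpha W_m(\ax)}\le L_{X_\alpha}\bigl(e^{\alpha h(\alpha)},m+\ax\bigr)\le(1+\eta)^2\,\frac{e^{\alpha h(\alpha)}}{\gW}\,e^{-\alpha W_m(\ax)}.
\]
Recalling that $\gW=\int_{a_r}^{b_r}e^{-\alpha W_m(y)}\ud y$ and the choice of $\eta$, this is precisely the asserted inequality.

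The argument carries no genuine difficulty beyond this bookkeeping. The only delicate point is that the shifted exponents $h_i$ must still satisfy $h_i(\alpha)\to1$ for Propositions~\ref{limLT} and~\ref{limT} to remain applicable, which is guaranteed by $\tfrac1\alpha\log\gW\to0$; and one must make sure the two-sided bracketing of $e^{\alpha h(\alpha)}$ by the inverse local time clocks holds simultaneously, which follows from applying Proposition~\ref{limT} separately to each of the two endpoints.
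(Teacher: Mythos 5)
Your proof is correct and follows essentially the same route as the paper's: bracketing the deterministic instant $e^{\alpha h(\alpha)}$ between two inverse-local-time clocks $\sigma_{X_\alpha}(e^{\alpha h_i(\alpha)},m)$ built from shifted exponents $h_i\to1$, applying Propositions~\ref{limLT} and~\ref{limT} to each, and finishing via monotonicity of $t\mapsto L_{X_\alpha}(t,\cdot)$. The only difference is a cosmetic reparameterization of the modified exponents (your $h_1,h_2$ play the roles of the paper's $h^-,h^+$).
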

\begin{proof}
Let $\delta>0$, and $f:\R^+\rightarrow \R^+$ such that $\limab f(\alpha)=1$, define
\begin{displaymath}
A_{\alpha,f}:=\left\{\sup_{-K\leq x\leq K}\left|\frac{L_{X_\alpha}(\tla(e^{\alpha f(\alpha)},m),m+\ax)}{e^{\alpha f(\alpha)-\alpha W_m(\ax)}}-1\right|\leq \delta\right\}
\end{displaymath}
and
\begin{displaymath}
B_{\alpha,f}:=\left\{\left|\frac{\tla(e^{\alpha f(\alpha)},m)}{\gW e^{\alpha f(\alpha)}}-1\right|\leq \delta\right\}
\end{displaymath}
where, as in the previous section, $g(\alpha)=\int_{a_{r}}^{b_{r}}e^{-\alpha W_m(y)}\ud y$.
We also define two functions
\begin{align*}
h^+(\alpha)&:=h(\alpha)-\alpha^{-1}\log\left(\gW(1-\delta)\right),\\
h^-(\alpha)&:=h(\alpha)-\alpha^{-1}\log\left(\gW(1+\delta)\right).
\end{align*}
On $B_{\alpha,h^+}$ the following inequality holds:
\begin{align*}
	\tla(e^{\alpha h^+(\alpha)},m)&\geq(1-\delta)\gW e^{\alpha h^+(\alpha)},\\
	&\geq e^{\alpha h(\alpha)},
\end{align*}
moreover in its first coordinate the local time is an increasing function, therefore on $A_{\alpha,h^+}\cap B_{\alpha,h^+}$, $\forall x\in [-K,K]$,
\begin{align*}
	L_{X_\alpha}(e^{\alpha h(\alpha)},m+\ax)&\leq L_{X_\alpha}(\tla(e^{\alpha h^+(\alpha)},m),m+\ax),\\
	&\leq e^{\alpha h^+(\alpha)-\alpha W_m(\ax)}(1+\delta),\\
	&\leq\frac{e^{\alpha h(\alpha)-\alpha W_m(\ax)}}{\gW}\frac{1+\delta}{1-\delta}.
\end{align*}
In the same way, on $A_{\alpha,h^-}\cap B_{\alpha,h^-}$ we obtain
\begin{displaymath}
	L_{X_\alpha}(e^{\alpha h(\alpha)},m+\ax)\geq\frac{e^{\alpha h(\alpha)-\alpha W_m(\ax)}}{\gW}\frac{1-\delta}{1+\delta}.
\end{displaymath}
By Laplace's method, $\limab \frac{\log g(\alpha)}{\alpha}=0$, so $h^+$ and $h^-$ tend to $1$ when $\alpha$ goes to infinity and we can apply Propositions \ref{limLT} and \ref{limT}, finally 
\begin{displaymath}
\lima P\left(A_{\alpha,h^+}\cap B_{\alpha,h^+}\cap A_{\alpha,h^-}\cap B_{\alpha,h^-}\right)=1	
\end{displaymath}
and the proposition is proved.
\end{proof}

 We turn back to the proof of the theorem, notice that the difference between  Theorem \ref{limL} and Proposition \ref{cle} above is the process itself: one deals with $X$ whereas the other deals with $X_{\alpha}$. To finish the proof we need to show that thanks to Lemma \ref{egx}, the theorem can be deduced from the proposition. 

Let $\alpha>0$ and recall that $W^\alpha(.):=\alpha^{-1}W(\alpha^2\cdot)$. First, remark that for all $W\in\mathcal{W}$, 
\begin{align*}
m_1(W^\alpha)&=\alpha^{-2}m_\alpha(W),\\
 a_{ r}(W^\alpha_{m_1(W^\alpha)})&=\alpha^{-2}a_{\alpha r}(W_{m_\alpha(W)}),\\
 b_{r}(W^\alpha_{m_1(W^\alpha)})&=\alpha ^{-2} b_{\alpha r}(W_{m_\alpha(W)}),
\end{align*}
and for any $x\in\R$,
\begin{displaymath}
W^\alpha_{m_1(W^\alpha)}(x)=\frac{1}{\alpha}W_{m_\alpha(W)}(\alpha^2x).
\end{displaymath}
Now replacing $t$ by $\alpha^{-4}e^{\alpha}$ in the second part of Lemma \ref{egx}, we obtain for every $W\in\mathcal{W}$,
\[
\left(L_{X(\alpha W^\alpha,\cdot)}(\alpha^{-4}e^{\alpha},m_1(W^\alpha)+\ax)\right)_{x\in\R}\eglw\left(\frac{1}{\alpha^2}L_{X(W,\cdot)}(e^\alpha,m_\alpha(W)+x)\right)_{x\in\R}.
\]
Therefore, for any $\alpha>0$, $\delta>0,K>0$ and $W\in \mathcal{W}$,
\begin{align*}
&P\left(\sup_{-K\leq x\leq K}\left|\frac{L_{X(W,\cdot)}(e^{\alpha},m_\alpha(W)+x)}{e^{\alpha}}\frac{\int_{a_{\alpha r}}^{b_{\alpha r}}e^{-W_{m_\alpha}(y)}\ud y}{e^{-W_{m_\alpha}(x)}}-1\right|<\delta\right)\quad=\\
&P\left(\sup_{-K\leq x\leq K}\left|\frac{L_{X(\alpha W^\alpha,\cdot)}(\alpha^{-4}e^{\alpha},m_1(W^\alpha)+\ax)}{\alpha^{-2}e^{\alpha}}\frac{\int_{\alpha^2a_{r}}^{\alpha^2b_{r}}e^{-\alpha W^\alpha_{m_1}(\alpha^{-2}y)}\ud y}{e^{-\alpha W^\alpha_{m_1}(\alpha^{-2}x)}}-1\right|<\delta\right).
\end{align*} 
Moreover, for all $\alpha>0$, $\mathcal{P}$ is invariant under the transformation $W\mapsto W^\alpha$, we get that 
\begin{align*}
&\Pp\left(\sup_{-K\leq x\leq K}\left|\frac{L_{X(W,\cdot)}(e^{\alpha},m_\alpha(W)+x)}{e^{\alpha}}\frac{\int_{a_{\alpha r}}^{b_{\alpha r}}e^{-W_{m_\alpha}(y)}\ud y}{e^{-W_{m_\alpha}(x)}}-1\right|<\delta\right)\quad=\\
&\Pp\left(\sup_{-K\leq x\leq K}\left|\frac{L_{X(\alpha W,\cdot)}(\alpha^{-4}e^{\alpha},m_1(W)+\ax)}{\alpha^{-4}e^{\alpha}}\frac{\int_{a_{r}}^{b_{r}}e^{-\alpha W_{m_1}(y)}\ud y}{e^{-\alpha W_{m_1}(\ax)}}-1\right|<\delta\right)
\end{align*}
and we recall that $\Pp=\mathcal{P} \otimes P$. To finish the proof we notice that $\mathcal{P}(\tW)=1$, $\alpha^{-4}e^{\alpha}=e^{\alpha(1-\frac{4}{\alpha}\log\alpha)}$ and $\limab(1-\frac{4}{\alpha}\log\alpha)=1$, so applying Proposition \ref{cle} we get Theorem \ref{limL}.
\hfill $\square$ 
\vspace{0.2cm}

As we said at the beginning of the section, once Theorem \ref{limL} is proved, we get the other results by studying in details some properties of the random environment. 

\subsection{Proof of Theorems \ref{thlimsup} and \ref{cvloi} \label{S3.3}}
Denote $m^*(t):=\min\{x\in\R,\ L_X(x,t)=L_X^*(t)\}$ the (first) favorite point of the diffusion at time $t$. D. Cheliotis has shown in \cite{Cheliotis} that $m^*(e^\alpha)-m_{\alpha}$ converges in $\Pp$ probability to $0$.
Thus for any $K>0$, the processes $L_X^*(e^\alpha)/e^\alpha$ and $\sup_{x\in[-K,K]}L_X(e^\alpha,m_\alpha+x)/e^\alpha$ have the same limit in law and Theorem \ref{thlimsup} is now a straight forward consequence of Theorem  \ref{cvloi}. So we are left to prove Theorem \ref{cvloi}. The main elements of the proof are Theorem \ref{limL} and Lemmata \ref{eqint} and \ref{cvloiw} below. 
 We begin with the proof of the first lemma:
\begin{lemme}\label{eqint}
For all $r \in (0,1)$, $\mathcal{P}$-almost surely for all $a_r\leq a< 0< b\leq b_r$, we get
$$\int_{a_r}^{b_r} e^{-\alpha W_{m_1}(y)}dy \underset{\alpha\rightarrow +\infty}{\sim} \int_a^b e^{-\alpha W_{m_1}(y)}dy.$$
\end{lemme}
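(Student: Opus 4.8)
The plan is to localize both integrals around the point $0$, where $W_{m_1}$ vanishes, exploiting that $[a,b]$ already contains a neighbourhood of $0$ because $a<0<b$. I would write
$$\int_{a_r}^{b_r} e^{-\alpha W_{m_1}(y)}\,\ud y = \int_{a_r}^{a} e^{-\alpha W_{m_1}(y)}\,\ud y + \int_{a}^{b} e^{-\alpha W_{m_1}(y)}\,\ud y + \int_{b}^{b_r} e^{-\alpha W_{m_1}(y)}\,\ud y$$
and then show that, as $\alpha\to+\infty$, the first and third terms are exponentially small in $\alpha$ whereas the second decays only sub-exponentially, so that the two tail integrals are negligible against the central one and the asserted equivalence follows.

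The only input on the environment that I need is that, $\mathcal{P}$-almost surely, $W_{m_1}(y)>0$ for every $y\in[a_r,b_r]$ with $y\neq0$. Since $m_1$ is a $1$-minimum there is an interval $[\xi,\zeta]\ni m_1$ with $W\ge W(m_1)$ on $[\xi,\zeta]$ and $W(\xi),W(\zeta)\ge W(m_1)+1$; as $W_{m_1}(\xi-m_1)\ge1>r$ and $W_{m_1}(\zeta-m_1)\ge1>r$, the definitions of $a_r,b_r$ give $\xi\le a_r+m_1$ and $b_r+m_1\le\zeta$, so $W_{m_1}\ge0$ on $[a_r,b_r]$, and $W_{m_1}(a_r)=W_{m_1}(b_r)=r>0$ by continuity. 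If now $W_{m_1}(y_0)=0$ for some $y_0\in(a_r,b_r)\setminus\{0\}$, then $m_1+y_0$ is an interior local minimum of $W$ attaining the same value $W(m_1)$ as the local minimum at $m_1$; but it is classical that $\mathcal{P}$-a.s. the local minima of a two-sided Brownian motion take pairwise distinct values, so this has probability $0$. I expect this paragraph to contain the only genuinely delicate point: the precise formulation of the ``no two equal local minima'' property and the check that $m_1+y_0$ is a local minimum and not merely a crossing of the level $W(m_1)$.

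Granting this, fix a realization in the resulting full-measure set and fix $a\in[a_r,0)$, $b\in(0,b_r]$. By continuity of $W_{m_1}$ and compactness of $[a_r,a]\cup[b,b_r]$, a set not containing $0$, we get $\rho:=\min_{[a_r,a]\cup[b,b_r]}W_{m_1}>0$, whence
$$\int_{a_r}^{a} e^{-\alpha W_{m_1}(y)}\,\ud y+\int_{b}^{b_r} e^{-\alpha W_{m_1}(y)}\,\ud y\le(b_r-a_r)\,e^{-\alpha\rho}.$$
On the other hand, fix $\varepsilon\in(0,\rho)$; since $W_{m_1}(0)=0$ and $W_{m_1}$ is continuous there is $\eta>0$ with $[-\eta,\eta]\subset(a,b)$ and $W_{m_1}\le\varepsilon$ on $[-\eta,\eta]$, so that
$$\int_{a}^{b} e^{-\alpha W_{m_1}(y)}\,\ud y\ge 2\eta\, e^{-\alpha\varepsilon}.$$

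Dividing the last two displays, the ratio of the two tail integrals to the central one is at most $\frac{b_r-a_r}{2\eta}\,e^{-\alpha(\rho-\varepsilon)}$, which tends to $0$ as $\alpha\to+\infty$ since $\rho>\varepsilon$; hence $\int_{a_r}^{b_r}e^{-\alpha W_{m_1}(y)}\,\ud y=(1+o(1))\int_{a}^{b}e^{-\alpha W_{m_1}(y)}\,\ud y$, which is the claimed equivalence. Because the exceptional null set from the second paragraph is chosen once and for all, independently of $a$ and $b$, the conclusion holds $\mathcal{P}$-a.s. simultaneously for all admissible $a$ and $b$.
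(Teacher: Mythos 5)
Your argument is correct and follows essentially the same route as the paper's own proof: split the big integral into the two tail pieces and the central piece, bound the tails by $(b_r-a_r)e^{-\alpha\rho}$ with $\rho:=\min_{[a_r,a]\cup[b,b_r]}W_{m_1}>0$, and compare against a sub-exponential lower bound for $\int_a^b e^{-\alpha W_{m_1}}$ (the paper invokes Laplace's method to get $\int_{a_r}^{b_r}=e^{o(\alpha)}$, you instead produce the interchangeable direct bound $\ge 2\eta e^{-\alpha\varepsilon}$ by continuity at $0$). The one place where you genuinely add content is the justification of $\rho>0$: the paper simply attributes $\min_{I}W_{m_1}>0$ to ``the definition of $\tW$'' (whose stated defining property is only $A<1<D$), whereas you derive it correctly from the fact that $[a_r,b_r]\subset[\xi-m_1,\zeta-m_1]$ forces $W_{m_1}\ge 0$ there, and a zero at $y_0\neq 0$ would create a second local minimum of $W$ at the same level as $m_1$, which is $\mathcal{P}$-a.s.\ excluded by the pairwise distinctness of local-minimum values of two-sided Brownian motion.
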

\begin{proof}
For any $W\in\mathcal{W}$,
\begin{align*}
\int_{a_r}^{b_r} e^{-\alpha W_{m_1}(y)}dy - \int_a^b e^{-\alpha W_{m_1}(y)}dy &= \int_{a_r}^a e^{-\alpha W_{m_1}(y)}dy + \int_b^{b_r} e^{-\alpha W_{m_1}(y)}dy\\
&\leq(b_r-a_r)e^{-\alpha \min_{I}W_{m_1}}
\end{align*}
where $I:=[a_r,a]\cup[b,b_r]$.
Moreover, by Laplace's method,
$$\lim_{\alpha\rightarrow +\infty} \frac{1}{\alpha} \log \int_{a_r}^{b_r} e^{-\alpha W_{m_1}(y)}dy = -\min_{[a_r,b_r]}(W_{m_1}).$$
As $a_r\leq 0\leq b_r$, this minimum is equal to $0$, thus $\int_{a_r}^{b_r} e^{-\alpha W_{m_1}(y)}dy = e^{o(\alpha)}$ and so,
$$\frac{\int_{a_r}^{b_r} e^{-\alpha W_{m_1}(y)}dy-\int_a^b e^{-\alpha W_{m_1}(y)}dy}{\int_{a_r}^{b_r} e^{-\alpha W_{m_1}(y)}dy} \leq (b_r-a_r)e^{-\alpha\min_{I}W_{m_1}+o(\alpha)}.$$
And for any $W\in\tW$, $\min_{I}W_{m_1}>0$, then letting $\alpha$ go to infinity we obtain the equivalence for any $W\in\tW$. As $\mathcal{P}(\tW)=1$, this implies the result of the lemma.
\end{proof}
Now, the proof of the theorem will be finished once we will have shown the following lemma.
\begin{lemme}\label{cvloiw}
For any positive constant $K$ and for any bounded continuous functional $F$ on $C(\R,\R)$ such that $F(f)$ depends only on the values of the function $f$ on $[-K,K]$,
\begin{equation}
 \lima\mathcal{E}\left[F\left(\frac{e^{-W_{m_\alpha}}}{\int_{a_{\alpha r}}^{b_{\alpha r}}e^{-W_{m_\alpha}(y)}\ud y}\right)\right]=\mathcal{E}\left[F\left(\frac{e^{-R}}{\int_{-\infty}^{\infty}e^{-R(y)}\ud y}\right)\right].
\end{equation}
where $\mathcal{E}$ stands for the expectation with respect to the probability measure $\mathcal{P}$ and
\begin{displaymath}
	\forall x\in\R,\ R(x):=R_1(x)\textbf{\emph{1}}_{\left\{x\geq0\right\}}+R_2(-x)\textbf{\emph{1}}_{\left\{x<0\right\}},
\end{displaymath}
$R_1$ and $R_2$ being two independent 3-dimensional Bessel processes.
\end{lemme}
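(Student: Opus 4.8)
The plan is to establish this as a weak-convergence statement about the environment near the bottom of the standard valley, decomposing the left-hand integral according to the geometry of $W_{m_\alpha}$ and then invoking a path-decomposition (Williams-type) description of Brownian motion near its minimum. The overarching strategy has two ingredients: first, a \emph{self-similarity reduction} that rescales $W_{m_\alpha}$ to a fixed-depth valley $W_{m_1}$; second, the classical fact that a Brownian motion, run from a local minimum in both directions and conditioned on the valley having depth at least $1$, looks near that minimum like two independent pieces which, after the appropriate conditioning, converge to three-dimensional Bessel processes (this is the content of Williams' decomposition, as already cited in the paper via \cite{Williams}).

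Concretely, I would first use the scaling invariance of $\mathcal{P}$ under $W\mapsto W^\alpha$ together with the identities for $m_1(W^\alpha)$, $a_r$, $b_r$ recorded just before this lemma, exactly as in the proof of Theorem~\ref{limL}. This replaces the functional $F\left(e^{-W_{m_\alpha}}/\int_{a_{\alpha r}}^{b_{\alpha r}}e^{-W_{m_\alpha}}\right)$ by a functional of $\alpha W^\alpha_{m_1}$ on the fixed window $[-K,K]$, and reduces the integral $\int_{a_{\alpha r}}^{b_{\alpha r}}e^{-W_{m_\alpha}}$ to $\alpha^2\int_{a_r(W^\alpha)}^{b_r(W^\alpha)}e^{-\alpha W^\alpha_{m_1}}$. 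Next I would invoke Lemma~\ref{eqint} to replace, up to an asymptotically negligible error, the integral over $(a_r,b_r)$ by the integral over a \emph{fixed} compact interval $[a,b]\supset[-K,K]$ with $a_r\le a<0<b\le b_r$; this is the step that makes the denominator a continuous functional of the restriction of $W_{m_1}$ to $[a,b]$. At that point the whole quantity inside $F$ is (asymptotically) a bounded continuous functional of $\big(\alpha W^\alpha_{m_1}(x),\ x\in[a,b]\big)$.

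The analytic heart is then the weak convergence, on $[a,b]$, of the rescaled conditioned potential $\big(\alpha W^\alpha_{m_1}(x)\big)_{x\in[a,b]}$ to $\big(R(x)\big)_{x\in[a,b]}$, two independent $3$-dimensional Bessel processes back to back. Here $W^\alpha_{m_1}$ is the shifted difference of potential of a Brownian motion around the bottom $m_1$ of its standard $1$-valley, so by translation it is a two-sided Brownian motion started at $0$ conditioned to stay nonnegative on $[p_1-m_1,q_1-m_1]$ (with the depth/ascent constraints defining $\tW$). As $\alpha\to\infty$ only the behavior of $W_{m_1}$ on a window of size $O(\alpha^{-2})$ around the minimum matters for the rescaled process, and on such a shrinking window the conditioning ``$W(p_1)\ge W(m_1)+1$, $W(q_1)\ge W(m_1)+1$, $W\ge W(m_1)$ in between'' degenerates exactly to Williams' two-sided description: a Brownian motion killed when it hits level $1$, decomposed at its minimum, gives two independent BES(3) processes started from $0$. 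Running the time change that realizes $W^\alpha_{m_1}(x)=\alpha^{-1}W_{m_1}(\alpha^2 x)$ is precisely a Brownian scaling that sends this local picture to a process on all of $\R$. I would make this rigorous either by citing Le Gall's Ray--Knight description already used in \eqref{Bes2} and the companion statement \cite{Williams}, or by a direct computation: the scaled process on $[a,b]$ has the same law as $\big(\alpha^{-1}W_{m_1}(\alpha^2 x)\big)$, whose restriction, after conditioning, converges in $C([a,b])$ to $R$; then $F$ being bounded continuous and the map $f\mapsto F\!\left(e^{-f}/\int_a^b e^{-f}\right)$ being continuous and bounded on $C([a,b])$ (the denominator is bounded below by $b-a>0$ times $e^{-\max}$, hence away from $0$), the convergence of expectations follows from the portmanteau theorem. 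Finally one checks $\Pp$-a.s.\ $\int_{-\infty}^\infty e^{-R(y)}\ud y<\infty$ (stated in the paper right after Theorem~\ref{thlimsup}) so that the right-hand side is well defined, and that enlarging $[a,b]$ back to $(-\infty,\infty)$ in the denominator on the limit side only costs a negligible term, again by Lemma~\ref{eqint}'s type of estimate applied to $R$.

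The main obstacle I anticipate is the last ingredient: pinning down the convergence of the conditioned, rescaled Brownian potential near the bottom of the standard valley to the two-sided BES(3) process, uniformly enough to push it through a \emph{functional} (not just finite-dimensional) limit and through the conditioning that defines $\tW$. One must argue that the global conditioning (depth $>1$, ascent $<1$, and that $\Delta_1$ is the valley containing $0$) has, asymptotically, no effect beyond the local ``stay positive near the minimum'' constraint — i.e.\ the events distinguishing $\tW$ have probabilities bounded away from $0$ and are asymptotically independent of the $O(\alpha^{-2})$-window behavior — and that tightness in $C([a,b])$ holds. I would handle this by an absolute-continuity/path-decomposition argument at the minimum of $W_{m_1}$ combined with Brownian scaling, rather than by brute force; modulo that, everything else is routine.
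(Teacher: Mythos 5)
Your overall route is the same as the paper's: use the invariance of $\mathcal{P}$ under $W\mapsto W^\alpha$ to rewrite the expectation as $\mathcal{E}\bigl[F\bigl(e^{-\alpha W_{m_1}(\alpha^{-2}\cdot)}\big/\alpha^2\int e^{-\alpha W_{m_1}(y)}\,\ud y\bigr)\bigr]$, adjust the range of integration with Lemma~\ref{eqint}, and then establish that the zoomed, conditioned potential around $m_1$ converges to the two-sided BES(3) process $R$. The paper does exactly this and settles the last (and only substantial) step by invoking the arguments of Lemma~3.2 of \cite{Tanaka}. You instead propose to prove that step via Williams' decomposition \cite{Williams}, but you explicitly leave open the key point: that the global conditioning defining the standard $1$-valley and $\tW$ (depth $>1$, ascent $<1$, the valley straddling $0$) reduces, on the $O(\alpha^{-2})$ window around $m_1$, to the local ``two independent BES(3)'' picture, with enough uniformity for a functional (not just finite-dimensional) limit. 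That is the analytic heart of the lemma; declaring it the ``main obstacle'' to be handled ``modulo that'' leaves a genuine gap, which the paper closes by citation rather than by a new argument.

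There is also a concrete misstep in the reduction you do carry out. After rescaling, the numerator of the argument of $F$ involves the zoomed process $x\mapsto\alpha W_{m_1}(\alpha^{-2}x)$ on the fixed window $[-K,K]$ (i.e.\ $W_{m_1}$ on a window of size $O(\alpha^{-2})$), while the denominator, written in the zoomed variable, is $\int_{\alpha^2a}^{\alpha^2b}e^{-\alpha W_{m_1}(\alpha^{-2}z)}\,\ud z$, an integral of that same zoomed process over a window growing like $\alpha^2$. Hence fixing $[a,b]$ by Lemma~\ref{eqint} does not make the quantity inside $F$ a continuous functional of the rescaled process on a fixed compact, and the portmanteau argument with $f\mapsto F\bigl(e^{-f}/\int_a^be^{-f}\bigr)$ on $C([a,b])$ does not apply as stated (note also that $[a,b]\subset[a_r,b_r]$ and $[-K,K]$ live at different scales, so requiring $[a,b]\supset[-K,K]$ conflates them). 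What is needed is joint convergence of the zoomed process on compacts and of its exponential integral over the expanding window to $\bigl(R,\int_{\R}e^{-R}\bigr)$, which demands uniform-in-$\alpha$ tail control of the integral, i.e.\ quantitative growth of the conditioned potential away from its minimum; Lemma~\ref{eqint} (a comparison of two macroscopic integration ranges) and an estimate ``applied to $R$'' on the limit side do not provide it. This tail control is again part of what the cited arguments of Lemma~3.2 of \cite{Tanaka} supply.
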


\begin{proof}
According to the scaling property of the Brownian motion,
\begin{equation*}
 \mathcal{E}\left[F\left(\frac{e^{-W_{m_\alpha}(\cdot)}}{\int_{a_{\alpha r}}^{b_{\alpha r}}e^{-W_{m_\alpha}(y)}\ud y}\right)\right]=\mathcal{E}\left[F\left(\frac{e^{-\alpha W_{m_1}(\alpha^{-2}\cdot)}}{\alpha^2\int_{a_{r}}^{b_{r}}e^{-\alpha W_{m_1}(y)}\ud y}\right)\right].
\end{equation*}
So, thanks to Lemma \ref{eqint}, denoting
\begin{displaymath}
\tilde{a}_r:=\left\{
\begin{array}{lc}
a_r&\textrm{if } m_1<0\\
a_r\vee0& \textrm{if } m_1\geq0 
\end{array}\right.
 \textrm{and } \tilde{b}_r:=\left\{
\begin{array}{lc}
 b_r& \textrm{if } m_1\geq0\\
b_r\wedge0& \textrm{if } m_1<0
\end{array}\right.,
\end{displaymath}
it is enough to prove that 
\begin{equation*}
 \lima\mathcal{E}\left[F\left(\frac{e^{-\alpha W_{m_1}(\alpha^{-2}\cdot)}}{\alpha^2\int_{\tilde{a}_{r}}^{\tilde{b}_{r}}e^{-\alpha W_{m_1}(y)}\ud y}\right)\right]=\mathcal{E}\left[F\left(\frac{e^{-R}}{\int_{-\infty}^{\infty}e^{-R(y)}\ud y}\right)\right].
\end{equation*}
This can be done using the same arguments as in the proof of Lemma 3.2 in Tanaka \cite{Tanaka}. 
\end{proof}

\noindent \\ \textbf{ Acknowledgment : } We would like to thank Romain Abraham for very helpful discussions, many remarks and comments on the different versions of this paper.

\bibliographystyle{plain} 
 \bibliography{thbiblio}
 
\end{document}